\setlist[enumerate]{itemsep=0mm}
\newcommand{\ct}{\mathsf{H}}
\newcommand{\tr}{\mathsf{T}}
\newcommand{\etal}{{\em et~al.~}}
\theoremstyle{plain}
\newtheorem{theorem}{Theorem}
\newtheorem{corollary}{Corollary}
\newtheorem{lemma}{Lemma}
\newtheorem{proposition}{Proposition}
\theoremstyle{definition}
\newtheorem{algorithm}{Algorithm}
\newtheorem{assumption}{Assumption}
\theoremstyle{remark}
\newtheorem{remark}{Remark}
\begin{document}

\title{Subspace dynamic mode decomposition for stochastic Koopman analysis}%

\begin{abstract}
The analysis of nonlinear dynamical systems based on the Koopman operator is attracting attention in various applications. Dynamic mode decomposition (DMD) is a data-driven algorithm for Koopman spectral analysis, and several variants with a wide range of applications have been proposed. However, popular implementations of DMD suffer from observation noise on random dynamical systems and generate inaccurate estimation of the spectra of the stochastic Koopman operator. In this paper, we propose {\em subspace DMD} as an algorithm for the Koopman analysis of random dynamical systems with observation noise. Subspace DMD first computes the orthogonal projection of future snapshots to the space of past snapshots and then estimates the spectra of a linear model, and its output converges to the spectra of the stochastic Koopman operator under standard assumptions. We investigate the empirical performance of subspace DMD with several dynamical systems and show its utility for the Koopman analysis of random dynamical systems.
\end{abstract}

\author{Naoya Takeishi}\email[]{takeishi@ailab.t.u-tokyo.ac.jp}
\affiliation{Department of Aeronautics and Astronautics, The University of Tokyo, Bunkyo, Tokyo, Japan}
\author{Yoshinobu Kawahara}
\affiliation{The Institute of Scientific and Industrial Research, Osaka University, Ibaraki, Osaka, Japan}
\affiliation{RIKEN Center for Advanced Intelligence Project, Chuo, Tokyo, Japan}
\author{Takehisa Yairi}
\affiliation{Department of Aeronautics and Astronautics, The University of Tokyo, Bunkyo, Tokyo, Japan}
\maketitle

\section{Introduction}
\label{sec:introduction}

Operator-theoretic approaches for the analysis of dynamical systems, which rely on the Perron--Frobenius operator \cite{Lasota94} or its adjoint, the Koopman operator \cite{Koopman31}, are attracting attention for use in mathematical and engineering applications. The Koopman operator is an infinite-dimensional linear operator that acts on a space of observation functions (observables), and the analysis based on it has been intensively studied recently (see, e.g., \cite{Mezic05,Budisic12,Mezic13}). Several methods for conducting the Koopman spectral analysis have been proposed, such as the generalized Laplace analysis \cite{Budisic12} and the Ulam--Galerkin method \cite{Froyland13}.

Dynamic mode decomposition (DMD) \cite{Rowley09,Schmid10} is a data-driven method that can be utilized for Koopman spectral analysis. It has been applied to a wide range of scientific and engineering subjects including fluid mechanics \cite{Schmid11}, power system analysis \cite{Susuki14}, medical care \cite{Bourantas14}, epidemiology \cite{Proctor15}, robotic control \cite{Berger15}, neuroscience \cite{Brunton16e}, image processing \cite{Kutz16}, nonlinear system identification \cite{Mauroy16}, finance \cite{Mann16}, and chaotic systems \cite{Brunton17}. DMD computes a set of modes along with the corresponding frequencies and decay rates, given a sequence of measurements from the target dynamics. Those modes coincide with the ones obtained by the Koopman spectral analysis under certain conditions, which we briefly review in Section~\ref{sec:background}.

In practice, popular implementations of DMD (e.g., \cite{Schmid10,Tu14}) suffer from {\em observation noise}. Several researchers have addressed this issue; Duke~\etal~\cite{Duke12} and Pan~\etal~\cite{Pan15} conducted error analyses on the DMD algorithms, and Dawson~\etal~\cite{Dawson16} and Hemati~\etal~\cite{Hemati17} proposed reformulating DMD as a total-least-squares problem to treat the observation noise explicitly. Moreover, there is a line of research on the low-rank approximation of dynamics, including optimized DMD \cite{Chen12}, optimal mode decomposition \cite{Wynn13}, sparsity-promoting DMD \cite{Jovanovic14}, and the closed-form solution for a low-rank constrained problem \cite{Heas17}. In addition, Takeishi~\etal~\cite{Takeishi17} suggested a Bayesian formulation of DMD to incorporate uncertainties. Those studies provide clear perspectives on the treatment of the observation noise. Note that, however, they focus on deterministic dynamical systems, i.e., they do not explicitly deal with {\em process noise}, which limits their applicability to situations where the underlying dynamics contain random effects.

In fact, the Koopman analysis can also be applied to dynamical systems with process noise via the {\em stochastic Koopman operator} \cite{Mezic05}. The spectra of the stochastic Koopman operator may convey information on the process noise; Bagheri~\etal~\cite{Bagheri14} investigated the effects of weak noise on the spectra of the Koopman operator for oscillating flows. The DMD algorithms are applicable even to stochastic systems \cite{Williams15a}, {\em unless observation noise is present}. However, the existing variants of DMD do not explicitly consider {\em both} observation and process noise, and, in fact, most of them cannot compute the spectra of the stochastic Koopman operator accurately from noisy observations, which is partly demonstrated in Section~\ref{sec:example} using numerical examples.

In this paper, we present an algorithm based on the stochastic Koopman operator for decomposing nonlinear random dynamical systems from noisy observations. The proposed algorithm is referred to as {\em subspace DMD} because it has a strong connection to the subspace system identification methods developed in control theory. Subspace DMD is aware of both the observation noise and process noise at the same time, and we show its validity with numerical examples.

The remainder of this paper is organized as follows. In Section~\ref{sec:background}, we introduce the fundamental concepts required to understand the purpose and procedures of Koopman analysis and DMD. Section~\ref{sec:noisy} includes the main results of this paper, the algorithm of subspace DMD. In Section~\ref{sec:example}, we introduce numerical examples to show the empirical performance of subspace DMD. In Section~\ref{sec:discussion}, we mention the important elements of DMD that are not fully addressed in this paper. This paper ends with the conclusions in Section~\ref{sec:conclusion}.

\section{Background}
\label{sec:background}

We briefly review the method for decomposing nonlinear dynamical systems based on the spectra and the invariant subspace of the Koopman operator, along with a corresponding numerical procedure. Regarding the theory of Koopman spectral analysis, readers can consult studies such as \cite{Mezic05,Budisic12,Mezic13} for more details.

\subsection{Koopman spectral analysis on Koopman invariant subspace}\label{subsec:koopman}

Consider a discrete-time dynamical system
\begin{equation}
\bm{x}_{t+1} = \bm{f} (\bm{x}_t),\quad
\bm{x} \in \mathcal{M}
\label{eq:ddyn}
\end{equation}
with a map $\bm{f}:\mathcal{M}\to\mathcal{M}$ and time index $t \in \mathbb{T}=\{0\}\cup\mathbb{N}$, where $(\mathcal{M},\Sigma_\mathcal{M},\mu_\mathcal{M})$ is a probability space associated with a phase space $\mathcal{M}$.
Instead of trajectories in the phase space, we analyze the evolution of an {\em observable} $g: \mathcal{M} \to \mathbb{C}$ in a function space $\mathcal{G} \subset L^2(\mathcal{M},\mu_\mathcal{M})$.
{\em Koopman operator} $\mathcal{K}:\mathcal{G}\to\mathcal{G}$ is an infinite-dimensional linear operator defined as
\begin{equation}
\mathcal{K} g (\bm{x}) \coloneqq g (\bm{f}(\bm{x})) .
\end{equation}

Here, suppose that there exists a subspace $G \subset \mathcal{G}$ that is invariant to $\mathcal{K}$, i.e., $\exists G~\text{s.t.}~\mathcal{K}g\in G, \forall g\in G$ \footnote{The theory of decomposition based on the spectra of the Koopman operator does not necessarily require the existence of the invariant subspace. We introduce the notion of the invariant subspace here merely for clarifying the connection between the Koopman spectral analysis and dynamic mode decomposition.}.
Let us consider the restriction of $\mathcal{K}$ to $G$ and denote it by $K$. If $G$ is finite-dimensional, then $K$ also becomes a finite-dimensional linear operator.
Suppose we have a set of observables $\{g_1,\dots,g_n\}$ ($n<\infty$) that spans $G$, and let $\bm{K}$ be the representation of $K$ with regard to $\{g_1,\dots,g_n\}$, i.e.,
\begin{equation}\label{eq:k_matrix}
\begin{bmatrix} K g_1 & \cdots & K g_n \end{bmatrix}^\tr = \bm{K} \bm{g},
\end{equation}
where $\bm{g} = \begin{bmatrix} g_1 & \cdots & g_n \end{bmatrix}^\tr$.
Now let $\varphi$ be the eigenfunction of $K$ corresponding to an eigenvalue $\lambda$. Then, $\varphi$ with regard to $\{g_1,\dots,g_n\}$ is expressed as
\begin{equation}
\varphi(\bm{x}) = \bm{z}^\ct \bm{g} (\bm{x}),
\end{equation}
where $\bm{z}$ is the left-eigenvector of $\bm{K}$ corresponding to eigenvalue $\lambda$, since
\begin{equation*}
K \left( \bm{z}^\ct \bm{g} (\bm{x}) \right)
= \bm{z}^\ct \bm{K} \bm{g} (\bm{x})
= \lambda \bm{z}^\ct \bm{g} (\bm{x}).
\end{equation*}

Let $\bm{w}_i$ and $\bm{z}_i$ respectively be the right- and the left-eigenvector of $\bm{K}$ corresponding to an eigenvalue $\lambda_i$ for $i=1,\dots,n$.
In the sequel, without loss of generality, we assume that $\bm{w}$ and $\bm{z}$ are normalized so that $\bm{w}_{i'}^\ct\bm{z}_{i}=\delta_{i'i}$ ($\delta_{i'i}$ is $0$ if $i'=i$ and $1$ otherwise).
We assume that all the eigenvalues of $K$ are distinct, i.e., their multiplicities are one.
Then, any values of $\bm{g}$ are expressed as
\begin{equation}\label{eq:kmd_r1}
\bm{g}(\bm{x})
= \sum_{i=1}^n \bm{z}_i^\ct \bm{g}(\bm{x}) \bm{w}_i
= \sum_{i=1}^n \varphi_i(\bm{x}) \bm{w}_i,
\end{equation}
where $\varphi_i$ is the eigenfunction of $K$ corresponding to eigenvalue $\lambda_i$.
Applying $K$ on both sides of Eq.~\eqref{eq:kmd_r1} repeatedly starting at $\bm{x}=\bm{x}_0$, we obtain the modal decomposition of the values of the observables, i.e.,
\begin{equation}
\bm{g}(\bm{x}_t) = \sum_{i=1}^n \lambda_i^t \bm{c}_i,\quad
\bm{c}_i = \varphi_i(\bm{x}_0)\bm{w}_i,
\end{equation}
where coefficient $\bm{c}_i$ (or $\bm{w}_i$) is referred to as a {\em Koopman mode}.

The same sort of discussion is also possible for a continuous-time dynamical system
\begin{equation}
\frac{\mathrm{d}\bm{x}}{\mathrm{d}t} = \bm{f}(\bm{x}),\quad
\bm{x} \in \mathcal{M}.
\end{equation}
Instead of the Koopman operator, the {\em Koopman semigroup} $\{\mathcal{K}_c^t\}_{t\in\mathbb{R}^+}$ on this dynamical system is defined as
\begin{equation}
\mathcal{K}_c^t g(\bm{x}) \coloneqq g(\bm\phi(\bm{x},t)),
\end{equation}
where $\bm\phi(\bm{x},t)$ is the flow map that takes $\bm{x}$ as the initial state and returns the state after a time interval of length $t$. The infinitesimal generator of the Koopman semigroup is given as
\begin{equation}
\mathcal{K}_c = \lim_{t \to 0} \frac{\mathcal{K}_c^t g - g}{t}.
\end{equation}
Let $\lambda_c$ be an eigenvalue of $\mathcal{K}_c$. This ``continuous-time'' eigenvalue can be computed from the corresponding ``discrete-time'' eigenvalue $\lambda$ by $\lambda_c = \ln(\lambda)/\Delta t$, where $\Delta t$ is the temporal interval in discrete-time dynamical system~\eqref{eq:ddyn}.

\subsection{Dynamic mode decomposition}\label{subsec:dmd}

DMD \cite{Rowley09,Schmid10,DMDBook} is a decomposition method for numerical datasets whose output converges to the modal decomposition via the Koopman operator under some conditions.
Suppose that $\{g_1,\dots,g_n\}$ spans $G$ and that we have data matrices generated with $\bm{g}=\begin{bmatrix}g_1&\cdots&g_n\end{bmatrix}^\tr$:
\begin{equation}\label{eq:datapair}
\begin{aligned}
\bm{Y}_0 &= \begin{bmatrix} \bm{g}(\bm{x}_0) & \cdots & \bm{g}(\bm{x}_{m-1}) \end{bmatrix}\in\mathbb{C}^{n \times m} \quad\text{and} \\
\bm{Y}_1 &= \begin{bmatrix} \bm{g}(\bm{x}_1) & \cdots & \bm{g}(\bm{x}_m)) \end{bmatrix}\in\mathbb{C}^{n \times m}.
\end{aligned}
\end{equation}
The popular algorithm of DMD \cite{Schmid10,Tu14} leverages a compact singular value decomposition (SVD) to avoid a direct eigendecomposition of a large matrix, and its procedure is shown in Algorithm~\ref{alg:dmd}.
\begin{algorithm}[DMD \cite{Schmid10,Tu14}]\label{alg:dmd}\leavevmode
\begin{enumerate}
\item Build a pair of data matrices $(\bm{Y}_0,\bm{Y}_1)$ as in Eq.~\eqref{eq:datapair}.
\item Compute the compact SVD as $\bm{Y}_0 = \bm{U}_r \bm{S}_r \bm{V}_r^\ct$ with $\bm{U}_r \in \mathbb{C}^{n \times r}$, $\bm{S}_r \in \mathbb{C}^{r \times r}$ and $\bm{V}_r \in \mathbb{C}^{m \times r}$, where $r=\operatorname{rank}(\bm{Y}_0)$. \label{alg:dmd:svd}
\item Define matrix $\tilde{\bm{A}} = \bm{U}_r^\ct \bm{Y}_1 \bm{V}_r \bm{S}_r^{-1}$.
\item Compute the eigenvalues $\lambda$ and eigenvectors $\tilde{\bm{w}}$ of $\tilde{\bm{A}}$.
\item Return {\em dynamic modes} $\bm{w} = \lambda^{-1} \bm{Y}_1 \bm{V}_r \bm{S}_r^{-1} \tilde{\bm{w}}$ and the corresponding eigenvalues $\lambda$.
\end{enumerate}
\end{algorithm}

The convergence of an output of Algorithm~\ref{alg:dmd} in the large sample limit can be shown with the assumption of ergodicity as follows.
\begin{assumption}\label{asmp:ergodicity1}
The time average of a measurable function $\phi:\mathcal{M}\to\mathbb{C}$ converges to its space average, i.e.,
\begin{equation*}
\lim_{m\to\infty} \frac1m \sum_{j=0}^{m-1} \phi (\bm{x}_j) = \mathbb{E}_\mathcal{M} [\phi(\bm{x})] = \int_\mathcal{M} \phi(\bm{x}) d\mu_\mathcal{M},
\end{equation*}
for almost all $\bm{x}_0\in\mathcal{M}$.
\end{assumption}
\begin{proposition}\label{prop:dmd}
Suppose Assumption~\ref{asmp:ergodicity1} holds. If all the modes are sufficiently excited in the data (i.e., $r=\dim(G)$) and all the nonzero eigenvalues of $\bm{A}=\bm{Y}_1\bm{Y}_0^\dagger$ are distinct, then the dynamic modes calculated by Algorithm~\ref{alg:dmd} converge to the eigenvectors corresponding to the non-zero eigenvalues of $\bm{K}$ in $m\to\infty$ with probability one.
\end{proposition}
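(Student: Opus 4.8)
The plan is to establish something stronger than mere convergence: I claim that for all $m$ past some (random) threshold, and for $\mu_\mathcal{M}$-almost every $\bm{x}_0$, the matrix $\bm{A}=\bm{Y}_1\bm{Y}_0^\dagger$ equals $\bm{K}$ \emph{exactly} and the vectors returned by Algorithm~\ref{alg:dmd} are \emph{exactly} right-eigenvectors of $\bm{K}$, so that the stated $m\to\infty$ convergence with probability one follows trivially. Assume for simplicity that $\{g_1,\dots,g_n\}$ is a basis of $G$ (so $\bm{K}\in\mathbb{C}^{n\times n}$ is well defined and $\dim(G)=n$). First I would prove the consistency identity $\bm{Y}_1=\bm{K}\bm{Y}_0$. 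Koopman-invariance of $G$ turns Eq.~\eqref{eq:k_matrix} into $\bm{g}\circ\bm{f}=\bm{K}\bm{g}$ as an identity in $L^2(\mathcal{M},\mu_\mathcal{M})$, hence as a pointwise identity off a $\mu_\mathcal{M}$-null set $N$. Since the measure-preserving setting underlying Assumption~\ref{asmp:ergodicity1} makes $\mu_\mathcal{M}$ invariant under $\bm{f}$, each preimage $\bm{f}^{-t}(N)$ is null and so is $\bigcup_{t\ge0}\bm{f}^{-t}(N)$; thus for $\mu_\mathcal{M}$-a.e. $\bm{x}_0$ the entire orbit avoids $N$, giving $\bm{g}(\bm{x}_{t+1})=\bm{K}\bm{g}(\bm{x}_t)$ for every $t$, i.e. $\bm{Y}_1=\bm{K}\bm{Y}_0$.

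Next I would use ergodicity to obtain full excitation. The column span of $\bm{Y}_0$ is nondecreasing in $m$ and hence stabilizes; were it to stabilize at a proper subspace of $\mathbb{C}^n$, some nonzero $\bm{v}$ would satisfy $\bm{v}^\ct\bm{g}(\bm{x}_t)=0$ for all $t$, so the nonzero observable $h\coloneqq\bm{v}^\ct\bm{g}\in G$ would have vanishing time average of $|h|^2$ --- impossible, since by Assumption~\ref{asmp:ergodicity1} that time average equals $\int_\mathcal{M}|h|^2\,d\mu_\mathcal{M}=\|h\|_{L^2}^2>0$. Hence $\operatorname{rank}(\bm{Y}_0)=n$ for all large $m$ (so the hypothesis $r=\dim(G)$ is in fact automatic), and $\bm{Y}_0\bm{Y}_0^\dagger=\bm{I}_n$ forces $\bm{A}=\bm{Y}_1\bm{Y}_0^\dagger=\bm{K}$.

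It then remains to trace Algorithm~\ref{alg:dmd} through. With $r=n$ the compact SVD $\bm{Y}_0=\bm{U}_r\bm{S}_r\bm{V}_r^\ct$ has $\bm{U}_r$ unitary and $\bm{Y}_0^\dagger=\bm{V}_r\bm{S}_r^{-1}\bm{U}_r^\ct$, so $\bm{Y}_1\bm{V}_r\bm{S}_r^{-1}=\bm{A}\bm{U}_r=\bm{K}\bm{U}_r$ and $\tilde{\bm{A}}=\bm{U}_r^\ct\bm{K}\bm{U}_r$ is unitarily similar to $\bm{K}$. Therefore the eigenvalues $\lambda$ computed from $\tilde{\bm{A}}$ are exactly those of $\bm{K}$, and for an eigenpair $(\lambda,\tilde{\bm{w}})$ of $\tilde{\bm{A}}$ the vector $\bm{U}_r\tilde{\bm{w}}$ is a right-eigenvector of $\bm{K}$ for $\lambda$. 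When $\lambda\ne0$ this gives $\bm{Y}_1\bm{V}_r\bm{S}_r^{-1}\tilde{\bm{w}}=\bm{K}(\bm{U}_r\tilde{\bm{w}})=\lambda\,\bm{U}_r\tilde{\bm{w}}$, hence the returned mode is $\bm{w}=\lambda^{-1}\bm{Y}_1\bm{V}_r\bm{S}_r^{-1}\tilde{\bm{w}}=\bm{U}_r\tilde{\bm{w}}$, precisely a right-eigenvector of $\bm{K}$ belonging to the nonzero eigenvalue $\lambda$. The assumed distinctness of the nonzero eigenvalues makes each such eigenspace one-dimensional, so the identification is unambiguous up to an overall scalar; as it is an exact equality for all sufficiently large $m$ on a $\mu_\mathcal{M}$-full set of $\bm{x}_0$, convergence with probability one is immediate.

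The real work is concentrated in the first two steps: passing from the $L^2$ (a.e.) identity $\bm{g}\circ\bm{f}=\bm{K}\bm{g}$ to a genuinely pointwise identity along the sampled orbit --- which is what forces the invariance of $\mu_\mathcal{M}$ and the countable-union argument into the proof --- and arguing that ergodicity really makes $\bm{A}$ equal $\bm{K}$ rather than $\bm{K}$ post-composed with the orthogonal projector onto $\operatorname{range}(\bm{Y}_0)$. Without the basis assumption the same reasoning still works once one notes that $\operatorname{range}(\bm{Y}_0)$ is $\bm{K}$-invariant, so that $\tilde{\bm{A}}$ still captures exactly the nonzero part of the spectrum. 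Step~3 is routine linear algebra; the only points needing a word of care are the use of $\lambda\ne0$ (to divide by $\lambda$) and the fact that the nonuniqueness of $\bm{U}_r$ under repeated singular values cancels out of $\bm{w}=\bm{U}_r\tilde{\bm{w}}$.
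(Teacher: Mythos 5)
Your proof is correct, but it takes a genuinely different route from the paper's. The paper never uses the exact linear consistency $\bm{Y}_1=\bm{K}\bm{Y}_0$; instead it works with the population Gram matrices $\bm{G}_0=\mathbb{E}_\mathcal{M}[\bm{g}\bm{g}^\ct]$ and $\bm{G}_1=\mathbb{E}_\mathcal{M}[(\bm{g}\circ\bm{f})\bm{g}^\ct]$, notes $\bm{K}=\bm{G}_1\bm{G}_0^\dagger$ and $\bm{A}=\hat{\bm{G}}_1\hat{\bm{G}}_0^\dagger$ with $\hat{\bm{G}}_i=\frac1m\bm{Y}_i\bm{Y}_0^\ct$, invokes Assumption~\ref{asmp:ergodicity1} to get $\hat{\bm{G}}_i\to\bm{G}_i$ almost surely, and then uses rank-preservation (so the pseudoinverse is continuous) plus continuity of the eigendecomposition at matrices with distinct eigenvalues. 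Your argument instead establishes the stronger statement that $\bm{A}=\bm{K}$ \emph{exactly} once $\operatorname{rank}(\bm{Y}_0)=n$, using ergodicity only to guarantee eventual full excitation; this is sharper for the deterministic setting and makes the hypothesis $r=\dim(G)$ automatic for large $m$, and your care about upgrading the $L^2$ identity $\bm{g}\circ\bm{f}=\bm{K}\bm{g}$ to a pointwise identity along the orbit is a legitimate point the paper glosses over. What the paper's Gram-matrix route buys, and what you lose, is reusability: immediately after Proposition~\ref{prop:dmd} the same argument is recycled verbatim for the random dynamical system by merely redefining $\bm{G}_1$ as an expectation over $\mathcal{M}\times\Omega$, whereas your exact-consistency argument breaks as soon as process noise makes $\bm{Y}_1\neq\bm{K}_\Omega\bm{Y}_0$. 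Two small caveats that apply to both proofs and need not detain you: ``convergence of eigenvectors'' is only meaningful up to scalar normalization (your version sidesteps this by giving an exact eigenvector for every large $m$, arguably more cleanly), and the non-basis case, which you address via $\bm{K}$-invariance of $\operatorname{range}(\bm{Y}_0)$, is handled in the paper only implicitly through the minimum-norm choice $\bm{K}=\bm{G}_1\bm{G}_0^\dagger$.
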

\begin{proof}
Taking the inner product of both sides of Eq.~\eqref{eq:k_matrix} with $\bm{g}$, we have
\begin{equation*}
\bm{K} \bm{G}_0 = \bm{G}_1,~
\bm{G}_0 = \mathbb{E}_\mathcal{M} \left[ \bm{g}\bm{g}^\ct \right],~
\bm{G}_1 = \mathbb{E}_\mathcal{M} \left[  (\bm{g} \circ \bm{f}) \bm{g}^\ct \right],
\end{equation*}
and thus the minimum-norm solution for $\bm{K}$ is given as $\bm{K} = \bm{G}_1 \bm{G}_0^\dagger$, where $\bm{G}_0^\dagger$ is the Moore--Penrose pseudoinverse of $\bm{G}_0$.
In contrast, from the definition of $\bm{A}$,
\begin{equation*}
\bm{A} = \hat{\bm{G}}_1 \hat{\bm{G}}_0^\dagger, \quad
\hat{\bm{G}}_0 = \frac1m \bm{Y}_0 \bm{Y}_0^\ct, \quad
\hat{\bm{G}}_1 = \frac1m \bm{Y}_1 \bm{Y}_0^\ct,
\end{equation*}
and by Assumption~\ref{asmp:ergodicity1}, empirical matrices $\hat{\bm{G}}_0$ and $\hat{\bm{G}}_1$ converge to $\bm{G}_0$ and $\bm{G}_1$, respectively, in $m \to \infty$ with probability one. Moreover, because $r=\dim(G)$, the rank of $\hat{\bm{G}}_0$ is always $\dim(G)$ and thus $\hat{\bm{G}}_0^\dagger$ converges to $\bm{\bm{G}}_0^\dagger$ \cite{Rakocevic97}. Further, because Algorithm~\ref{alg:dmd} returns the eigenvectors corresponding to all the non-zero eigenvalues of $\bm{A}$ \cite{Tu14}, if those non-zero eigenvalues are distinct (i.e., their multiplicity is one), the outputs of Algorithm~\ref{alg:dmd} are continuous with respect to $\bm{A}$. Therefore, the dynamic modes converge to the eigenvectors corresponding to the non-zero eigenvalues of $\bm{K}$ with probability one.
\end{proof}
\begin{remark}
We have shown the convergence utilizing the assumption of ergodicity. However, the algorithm defined by Tu~\etal \cite{Tu14} does not require the sequential sampling in Eq.~\eqref{eq:datapair} as long as the corresponding columns of the data matrices are sampled with a fixed temporal interval. One can also prove the convergence (in probability) for this case using the law of large numbers if one assumes the snapshots in $\bm{Y}_0$ are independently sampled from $\mu_\mathcal{M}$.
\end{remark}
\begin{remark}
For an output of DMD to coincide with the modal decompositions via the Koopman operator, data must be generated from observables that span a subspace invariant to the Koopman operator. In this paper, we assume that the data in hand are intrinsically generated with such observables $\bm{g}$, as most of the existing DMD variants do. Some ways to design such observables manually are reviewed in Section~\ref{sec:discussion}.
\end{remark}
%

\section{Stochastic Koopman analysis with noisy observations}\label{sec:noisy}

In the previous section, we considered systems with no stochastic elements. However, stochasticity often comprises an essential part of a variety of physical phenomena and sensing. In this section, we introduce the notions of process noise on dynamics and observation noise on observables, and discuss Koopman analysis and DMD for stochastic noisy systems.

\subsection{Process noise on dynamics}

Instead of deterministic dynamical system~\eqref{eq:ddyn}, consider a random dynamical system (RDS) \cite{Arnold98}
\begin{equation}\label{eq:rdyn}
\begin{aligned}
\bm{x}_{t+1} = \bm{f}_\Omega (\bm{x}_t, \omega_t),\quad
\bm{x} \in \mathcal{M},\quad
\omega \in \Omega
\end{aligned}
\end{equation}
with a measure-preserving base flow $\vartheta:\Omega\to\Omega$, where $(\Omega,\Sigma_\Omega,\mu_\Omega)$ is a probability space of process noise. We assume that $\omega_t$ is independent from $\bm{x}_0,\dots,\bm{x}_t$.
A one-step evolution of observables $g$ with regard to the RDS can be characterized by {\em stochastic Koopman operator} $\mathcal{K}_\Omega$ \cite{Mezic05}, defined as
\begin{equation}
\mathcal{K}_\Omega g (\bm{x}) \coloneqq \mathbb{E}_\Omega \left[ g(\bm{f}_\Omega(\bm{x},\omega)) \right],
\end{equation}
where $\mathbb{E}_\Omega[\cdot]$ denotes expectation in sample space $\Omega$.
Note that deterministic Koopman operator $\mathcal{K}$ can be regarded as a special case of $\mathcal{K}_\Omega$.
Now let $K_\Omega$ be the restriction of $\mathcal{K}_\Omega$ to its invariant subspace $G$, suppose that a set of observables $\{g_1,\dots,g_n\}$ spans $G$, and let $\bm{g}=\begin{bmatrix}g_1 & \dots & g_n\end{bmatrix}^\tr$.
In addition, let $\bm{K}_\Omega \in \mathbb{C}^{n \times n}$ be the representation of $K_\Omega$ with regard to the components of $\bm{g}$. Then, we have
\begin{equation}\label{eq:g_def}
\bm{g}(\bm{x}_{t+1}) = \bm{K}_\Omega\bm{g}(\bm{x}_t) + \bm{e}_t,
\end{equation}
where
\begin{equation}
\bm{e}_t \coloneqq \bm{g}(\bm{f}(\bm{x}_t,\omega_t)) - \mathbb{E}_\Omega \left[ \bm{g}(\bm{f}(\bm{x}_t,\omega_t)) \right].
\end{equation}
Given $\bm{x}_0$, the solution of \eqref{eq:g_def} then becomes
\begin{equation}\label{eq:g_sol}
\bm{g}(\bm{x}_t) = \bm{K}_\Omega^t \bm{g}(\bm{x}_0) + \sum_{k=0}^{t-1} \bm{K}_\Omega^{t-k-1} \bm{e}_k.
\end{equation}
The modal decomposition of $\bm{g}$ via $K_\Omega$ can be obtained likewise, as shown in Section~\ref{subsec:koopman}. Regarding the characteristics of the spectra of the stochastic Koopman operator, Bagheri~\cite{Bagheri14} elaborated on the effects of weak noise in a limit cycle, and Williams~\etal~\cite{Williams15a} applied a variant of DMD to the data obtained from a stochastic differential equation.

The standard DMD (Algorithm~\ref{alg:dmd}) is also applicable to the RDS and $\mathcal{K}_\Omega$ if there is no observation noise and the ergodicity (Assumption~\ref{asmp:ergodicity1}) also holds for the RDS. This can be shown in a manner similar to the one in Proposition~\ref{prop:dmd}, except for the definition of $\bm{G}_0$ and $\bm{G}_1$, as follows. Let $\bm{Y}_0$ and $\bm{Y}_1$ be the data matrices generated from RDS $\bm{f}_\Omega$ and observable $\bm{g}$ as in Eq.~\eqref{eq:datapair}, and let us assume the whiteness on process noise.
\begin{assumption}\label{asmp:e_cov}
Process noise $\omega$ is independently and identically distributed in time, i.e., for all $t',t\in\mathbb{T}$,
\begin{equation*}
\mathbb{E}_\Omega \left[ \bm{e}_{t'}\bm{e}_{t}^\ct \right] = \bm{P}\delta_{t't}
\end{equation*}
for some $\bm{P}\in\mathbb{C}^{n \times n}$.
\end{assumption}
Then, from the law of large numbers and the assumption of ergodicity, the empirical matrices
\begin{equation*}
\begin{aligned}
\hat{\bm{G}}_0 &= \frac1m \bm{Y}_0\bm{Y}_0 = \frac1m\sum_{j=0}^{m-1}\bm{g}(\bm{x}_j)\bm{g}(\bm{x}_j)^\ct \quad \text{and} \\
\hat{\bm{G}}_1 &= \frac1m \bm{Y}_1\bm{Y}_0 = \frac1m\sum_{j=0}^{m-1}\bm{g}(\bm{f}_\Omega(\bm{x}_j,\omega_j)) \bm{g}(\bm{x}_j)^\ct
\end{aligned}
\end{equation*}
respectively converge to
\begin{equation*}
\begin{aligned}
\bm{G}_0 &= \mathbb{E}_\mathcal{M} \left[ \bm{g}(\bm{x})\bm{g}(\bm{x})^\ct \right]
\quad \text{and} \\
\bm{G}_1 &= \mathbb{E}_\mathcal{M} \left[ \mathbb{E}_\Omega \left[ \bm{g}(\bm{f}_\Omega(\bm{x},\omega)) \right] \bm{g}(\bm{x})^\ct \right] \\
&= \int_{\mathcal{M} \times \Omega} \bm{g}(\bm{f}_\Omega(\bm{x},\omega)) \bm{g}(\bm{x})^\ct d\mu_\mathcal{M} d\mu_\Omega
\end{aligned}
\end{equation*}
with probability one. One can use this convergence property to show the applicability of Algorithm~\ref{alg:dmd} for the RDS, as in the proof of Proposition~\ref{prop:dmd}.

\subsection{Observation noise on observables}

In addition to the process noise, let us take the observation noise into account. Consider a new (noisy) observable $\bm{h}: \mathcal{M} \times S \to \mathbb{C}^n$:
\begin{equation}\label{eq:h_def}
\bm{h}(\bm{x}_t,s_t) \coloneqq \bm{g}(\bm{x}_t) + \bm{w}(s_t),\quad
\bm{x} \in \mathcal{M},\quad
s \in S,
\end{equation}
where $\bm{w}: S \to \mathbb{C}^n$ is a random variable on a probability space $(S,\Sigma_S,\mu_S)$ of the observation noise. Hereafter, we denote $\bm{w}(s_t)$ by $\bm{w}_t$ for notational simplicity. Now assume that $s$ is independent from $\bm{x}$ and that $\bm{w}$ is a white noise.
\begin{assumption}\label{asmp:w_cov}
Observation noise $\bm{w}$ is zero-mean, has time-invariant finite variance, and is temporally uncorrelated, i.e., for all $t',t\in\mathbb{T}$,
\begin{equation*}
\begin{gathered}
\mathbb{E}_S \left[ \bm{w}_t \right] = 0,\quad
\mathbb{E}_S \left[ \bm{w}_{t'} \bm{w}_{t}^\ct \right] = \bm{Q}\delta_{t't},\\
\mathbb{E}_{\Omega,S} \left[ \bm{e}_{t'} \bm{w}_{t}^\ct \right] = \bm{R}\delta_{t't},
\end{gathered}
\end{equation*}
for some $\bm{Q},\bm{R}\in\mathbb{C}^{n \times n}$.
\end{assumption}
Note that under the presence of observation noise, an output of DMD (Algorithm~\ref{alg:dmd}) no longer converges to the spectra of the Koopman operator. An output of total-least-squares DMD \cite{Dawson16,Hemati17} is unbiased even for noisy observations as long as the dynamics are deterministic, but it is biased as a realization of $K_\Omega$ for the RDS. These inconsistencies in the existing methods are partly revealed in the numerical examples in Section~\ref{sec:example}.

\subsection{Statistics of noisy observables on RDS}

We would like to develop a DMD algorithm for stochastic Koopman analysis that is always aware of {\em both} the process noise and observation noise. To this end, we summarize the statistics of noisy observable $\bm{h}$ on RDS $\bm{f}_\Omega$.
Proofs of the lemmas in this section are deferred to Appendix for clarity of presentation.

First, assume that $\bm{g}$ is quasi-stationary (see \cite{Ljung99}), i.e.,
\begin{assumption}\label{asmp:quasi}
For almost all $\bm{x}_0\in\mathcal{M}$ and all $t',t\in\mathbb{T}$,
\begin{equation*}
\begin{aligned}
\mathbb{E}_\Omega \left[ \bm{g}(\bm{x}_{t}) \right] &= \bm{m}_{t}, \quad
\vert \bm{m}_{t} \vert < \infty, \\
\mathbb{E}_\Omega \left[ \bm{g}(\bm{x}_{t'}) \bm{g}(\bm{x}_{t})^\ct \right] &= \bm{G}_{t',t}, \quad \Vert\bm{G}_{t',t}\Vert_F < \infty, \\
\mathbb{E}_\mathcal{M} \left[ \bm{G}_{t,t} \right] &= \bm{G},
\end{aligned}
\end{equation*}
for some $\bm{G}\in\mathbb{C}^{n \times n}$.
\end{assumption}

Then, the second-order moment of $\bm{g}$, $\bm{G}_{t',t}$, satisfies the following properties.
\begin{lemma}\label{lem:p_g}
$\bm{G}_{t',t}$ is expressed as
\begin{equation*}
\bm{G}_{t',t} =
\begin{cases}
\bm{K}_\Omega^{t'-t} \bm{G}_{t,t}, & t' \geq t, \\
\bm{G}_{t',t'} (\bm{K}_\Omega^{t-t'})^\ct, & t' < t.
\end{cases}
\end{equation*}
\end{lemma}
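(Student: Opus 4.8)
The plan is to read everything off the linear recursion~\eqref{eq:g_def} together with the fact that the innovation $\bm{e}_k$ has zero conditional mean given the past trajectory. First I would iterate~\eqref{eq:g_def} from time $t$ up to $t'$, which is nothing but~\eqref{eq:g_sol} with the base point shifted from $\bm{x}_0$ to $\bm{x}_t$: for $t'\ge t$,
\begin{equation*}
\bm{g}(\bm{x}_{t'}) = \bm{K}_\Omega^{t'-t}\,\bm{g}(\bm{x}_t) + \sum_{k=t}^{t'-1} \bm{K}_\Omega^{t'-k-1}\,\bm{e}_k .
\end{equation*}
Multiplying on the right by $\bm{g}(\bm{x}_t)^\ct$ and taking $\mathbb{E}_\Omega$ (the interchange with the finite sum being justified by the second-moment finiteness in Assumption~\ref{asmp:quasi} via Cauchy--Schwarz), linearity gives
\begin{equation*}
\bm{G}_{t',t} = \bm{K}_\Omega^{t'-t}\,\bm{G}_{t,t} + \sum_{k=t}^{t'-1} \bm{K}_\Omega^{t'-k-1}\,\mathbb{E}_\Omega\!\left[\bm{e}_k\,\bm{g}(\bm{x}_t)^\ct\right] .
\end{equation*}

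The crux is to show that $\mathbb{E}_\Omega[\bm{e}_k\,\bm{g}(\bm{x}_t)^\ct]=\bm{0}$ for every $k\ge t$. With $\bm{x}_0$ held fixed, $\bm{g}(\bm{x}_t)$ is a measurable function of $\omega_0,\dots,\omega_{t-1}$, hence of $\bm{x}_0,\dots,\bm{x}_k$ whenever $k\ge t$; on the other hand, by the very construction of $\bm{e}_k$ and the hypothesis (stated after~\eqref{eq:rdyn}) that $\omega_k$ is independent of $\bm{x}_0,\dots,\bm{x}_k$, one has $\mathbb{E}_\Omega[\bm{e}_k \mid \bm{x}_0,\dots,\bm{x}_k]=\bm{0}$. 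Pulling $\bm{g}(\bm{x}_t)^\ct$ inside this conditional expectation (it is measurable with respect to the conditioning $\sigma$-algebra) and using the tower property annihilates the cross term. This is the only real obstacle: it is where the centering built into $\bm{e}_k$ and the nonanticipativity of the process noise must be invoked with care, and once it is dispatched the first branch $\bm{G}_{t',t}=\bm{K}_\Omega^{t'-t}\bm{G}_{t,t}$ is immediate.

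For the remaining branch $t'<t$ I would not redo the computation but simply conjugate-transpose: since $\bm{G}_{t',t}^\ct = \mathbb{E}_\Omega[\bm{g}(\bm{x}_t)\bm{g}(\bm{x}_{t'})^\ct] = \bm{G}_{t,t'}$ and now $t>t'$, the branch already proved yields $\bm{G}_{t,t'}=\bm{K}_\Omega^{t-t'}\bm{G}_{t',t'}$, whence
\begin{equation*}
\bm{G}_{t',t} = \left(\bm{K}_\Omega^{t-t'}\,\bm{G}_{t',t'}\right)^\ct = \bm{G}_{t',t'}^\ct\left(\bm{K}_\Omega^{t-t'}\right)^\ct = \bm{G}_{t',t'}\left(\bm{K}_\Omega^{t-t'}\right)^\ct ,
\end{equation*}
the last step using that $\bm{G}_{t',t'}=\mathbb{E}_\Omega[\bm{g}(\bm{x}_{t'})\bm{g}(\bm{x}_{t'})^\ct]$ is Hermitian. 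This settles both cases.
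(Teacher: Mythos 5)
Your proof is correct and takes essentially the same route as the paper: iterate the linear recursion \eqref{eq:g_def}, kill the cross term using the fact that $\bm{e}_k$ is centered and nonanticipative, and obtain the $t'<t$ branch by Hermitian conjugation. The only (harmless) difference is bookkeeping: you expand from base point $\bm{x}_t$ and dispatch the single cross term via the conditional-mean/tower argument, whereas the paper expands both factors from $\bm{x}_0$ and then invokes $\mathbb{E}_\Omega[\bm{e}_k]=0$ together with Assumption~\ref{asmp:e_cov} to annihilate the resulting $\bm{e}$--$\bm{g}(\bm{x}_0)$ and $\bm{e}$--$\bm{e}$ cross terms.
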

\begin{corollary}
Denote $\mathbb{E}_\mathcal{M}\left[\bm{G}_{t+\tau,t}\right]$ by $\bm{G}_\tau$. Then,
\begin{equation*}
\bm{G}_\tau = \bm{K}_\Omega^\tau \bm{G}.
\end{equation*}
\end{corollary}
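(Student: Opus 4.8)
The plan is to obtain this by simply averaging Lemma~\ref{lem:p_g} over initial conditions. First I would specialize the first branch of Lemma~\ref{lem:p_g} to $t' = t+\tau$ (with $\tau \ge 0$, which is the regime in which $\bm{G}_\tau$ is intended), which gives the pointwise identity $\bm{G}_{t+\tau,t} = \bm{K}_\Omega^\tau \bm{G}_{t,t}$ holding for almost every $\bm{x}_0 \in \mathcal{M}$.

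Next I would apply $\mathbb{E}_\mathcal{M}[\cdot]$ to both sides. The one point needing a word of justification is that $\bm{K}_\Omega$ is the fixed matrix representation of the stochastic Koopman operator restricted to $G$ and hence does not depend on $\bm{x}_0$; by linearity it therefore factors out of the expectation, and the finiteness bounds in Assumption~\ref{asmp:quasi} (in particular $\Vert\bm{G}_{t',t}\Vert_F < \infty$ together with the existence of $\bm{G}$) make this interchange legitimate with no integrability concern. This yields $\mathbb{E}_\mathcal{M}[\bm{G}_{t+\tau,t}] = \bm{K}_\Omega^\tau\, \mathbb{E}_\mathcal{M}[\bm{G}_{t,t}]$.

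Finally I would invoke the last line of Assumption~\ref{asmp:quasi}, namely $\mathbb{E}_\mathcal{M}[\bm{G}_{t,t}] = \bm{G}$ (which, being independent of $t$, is also what makes $\bm{G}_\tau$ a well-defined object in the first place), to conclude $\bm{G}_\tau = \bm{K}_\Omega^\tau \bm{G}$. I expect no genuine obstacle here: all of the substantive work sits in Lemma~\ref{lem:p_g}, and this corollary is an immediate consequence of it plus quasi-stationarity. The only subtlety worth flagging is the implicit restriction $\tau \ge 0$; for $\tau < 0$ the second branch of Lemma~\ref{lem:p_g} instead gives $\bm{G}_\tau = \bm{G}\,(\bm{K}_\Omega^{-\tau})^\ct$, a different-looking but consistent formula, so one may wish to state the corollary only for nonnegative shifts.
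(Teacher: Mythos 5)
Your proof is correct and follows exactly the route the paper intends: the corollary is stated as an immediate consequence of Lemma~\ref{lem:p_g}, obtained by setting $t'=t+\tau$, taking $\mathbb{E}_\mathcal{M}[\cdot]$ of both sides (with $\bm{K}_\Omega^\tau$ factoring out as a constant matrix), and applying $\mathbb{E}_\mathcal{M}[\bm{G}_{t,t}]=\bm{G}$ from Assumption~\ref{asmp:quasi}. Your remark about the implicit restriction $\tau\ge 0$ is a reasonable point of care but does not change the argument.
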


Now let us define $\bm{H}_{t',t}\coloneqq\mathbb{E}_{\Omega,S} \left[ \bm{h}(\bm{x}_{t'}) \bm{h}(\bm{x}_{t})^\ct \right]$, where we have dropped argument $s$ of $\bm{h}$ for ease of notation. Then, $\bm{H}_{t',t}$ satisfies the following properties.
\begin{lemma}\label{lem:p_h}
$\bm{H}_{t',t}$ is expressed as
\begin{equation*}
\bm{H}_{t',t} =
\begin{cases}
\bm{K}_\Omega^{t'-t-1} \left( \bm{K}_\Omega \bm{G}_{t,t}  + \bm{R} \right), & t' > t, \\
 \bm{G}_{t,t}  + \bm{Q}, & t' = t, \\
\bm{H}_{t,t'}^\ct, & t' < t.
\end{cases}
\end{equation*}
\end{lemma}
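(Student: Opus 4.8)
The plan is to reduce $\bm{H}_{t',t}$ to the one-step relation~\eqref{eq:g_def} (equivalently, its solution~\eqref{eq:g_sol}) together with the whiteness and independence assumptions, treating the three cases $t'=t$, $t'>t$, and $t'<t$ separately.

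I would start with the diagonal case $t'=t$. Substituting $\bm{h}(\bm{x}_t)=\bm{g}(\bm{x}_t)+\bm{w}_t$ into the definition and expanding $\bm{h}\bm{h}^\ct$ into four terms, the two $\bm{g}$--$\bm{w}$ cross terms vanish because $s$ is independent of $\bm{x}$ and $\bm{w}$ is zero-mean (Assumption~\ref{asmp:w_cov}); what remains is $\mathbb{E}_\Omega[\bm{g}(\bm{x}_t)\bm{g}(\bm{x}_t)^\ct]+\mathbb{E}_S[\bm{w}_t\bm{w}_t^\ct]=\bm{G}_{t,t}+\bm{Q}$ by Assumptions~\ref{asmp:quasi} and~\ref{asmp:w_cov}.

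For $t'>t$, I would use the solution of~\eqref{eq:g_def} started at time $t$,
\begin{equation*}
\bm{h}(\bm{x}_{t'})=\bm{K}_\Omega^{t'-t}\bm{g}(\bm{x}_t)+\sum_{k=t}^{t'-1}\bm{K}_\Omega^{t'-k-1}\bm{e}_k+\bm{w}_{t'},
\end{equation*}
multiply on the right by $\bm{g}(\bm{x}_t)^\ct+\bm{w}_t^\ct$, and take $\mathbb{E}_{\Omega,S}$ term by term. The $\bm{g}(\bm{x}_t)$--$\bm{g}(\bm{x}_t)$ term contributes $\bm{K}_\Omega^{t'-t}\bm{G}_{t,t}$; the $\bm{e}_k$--$\bm{w}_t$ terms contribute $\sum_{k=t}^{t'-1}\bm{K}_\Omega^{t'-k-1}\bm{R}\,\delta_{kt}=\bm{K}_\Omega^{t'-t-1}\bm{R}$, the single surviving index $k=t$ lying in $\{t,\dots,t'-1\}$ precisely because $t'>t$; and all other terms vanish. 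Collecting, $\bm{H}_{t',t}=\bm{K}_\Omega^{t'-t}\bm{G}_{t,t}+\bm{K}_\Omega^{t'-t-1}\bm{R}=\bm{K}_\Omega^{t'-t-1}(\bm{K}_\Omega\bm{G}_{t,t}+\bm{R})$. The case $t'<t$ is then immediate from the definition and the Hermitian symmetry of an outer product: $\bm{H}_{t',t}=(\mathbb{E}_{\Omega,S}[\bm{h}(\bm{x}_t)\bm{h}(\bm{x}_{t'})^\ct])^\ct=\bm{H}_{t,t'}^\ct$. (An alternative that avoids~\eqref{eq:g_sol} is to peel one step at a time, using Lemma~\ref{lem:p_g}: $\bm{H}_{t+1,t}=\bm{K}_\Omega\bm{G}_{t,t}+\bm{R}$ and $\bm{H}_{t',t}=\bm{K}_\Omega\bm{H}_{t'-1,t}$ for $t'>t+1$; unrolling gives the same formula.)

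The only step that is more than bookkeeping is verifying that the remaining cross terms vanish, in particular $\mathbb{E}_\Omega[\bm{e}_k\bm{g}(\bm{x}_t)^\ct]=0$ for $k\ge t$. For this I would condition on $\sigma(\bm{x}_0,\omega_0,\dots,\omega_{k-1})$, with respect to which both $\bm{x}_k$ and $\bm{g}(\bm{x}_t)$ are measurable; since $\omega_k$ is independent of $\bm{x}_0,\dots,\bm{x}_k$ (and of the earlier process noise), the definition $\bm{e}_k=\bm{g}(\bm{f}_\Omega(\bm{x}_k,\omega_k))-\mathbb{E}_\Omega[\bm{g}(\bm{f}_\Omega(\bm{x}_k,\omega_k))]$ gives $\mathbb{E}[\bm{e}_k\mid\bm{x}_k]=0$, so the tower property kills the term. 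The cross terms pairing $\bm{w}_{t'}$ with $\bm{g}(\bm{x}_t)$ or with $\bm{w}_t$ (for $t'\ne t$), as well as $\mathbb{E}_S[\bm{w}_{t'}\bm{w}_t^\ct]=\bm{Q}\delta_{t't}$ for $t'>t$, all vanish directly by Assumption~\ref{asmp:w_cov} and the independence of $s$ from $\bm{x}$. As with the other lemmas in this section, I would relegate these routine verifications to the appendix.
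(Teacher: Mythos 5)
Your proposal is correct and follows essentially the same route as the paper's proof: expand $\bm{h}=\bm{g}+\bm{w}$, substitute the solution formula~\eqref{eq:g_sol} for $\bm{g}$, and kill the cross terms using Assumptions~\ref{asmp:e_cov} and~\ref{asmp:w_cov}, with the $t'<t$ case by Hermitian symmetry. The only cosmetic difference is that you anchor the solution at time $t$ rather than time $0$ (and spell out the conditioning argument for $\mathbb{E}_\Omega[\bm{e}_k\bm{g}(\bm{x}_t)^\ct]=0$, which the paper leaves implicit via Lemma~\ref{lem:p_g}); the computation is otherwise identical.
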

\begin{corollary}
Denote $\mathbb{E}_\mathcal{M} \left[ \bm{H}_{t+\tau,t} \right]$ by $\bm{H}_\tau$. Then,
\begin{equation*}
\bm{H}_\tau
=\begin{cases}
 \bm{K}_\Omega^{\tau-1} \left( \bm{K}_\Omega \bm{G}  + \bm{R} \right), & \tau>0, \\
 \bm{G}  + \bm{Q}, & \tau=0.
\end{cases}
\end{equation*}
\end{corollary}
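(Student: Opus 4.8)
The plan is to obtain this corollary as an immediate consequence of Lemma~\ref{lem:p_h} by averaging over the initial state $\bm{x}_0$ with respect to $\mu_\mathcal{M}$. The key observation is that in the expression for $\bm{H}_{t',t}$ given by Lemma~\ref{lem:p_h}, the only factor that depends on the initial condition (and hence is random under $\mathbb{E}_\mathcal{M}$) is $\bm{G}_{t,t}$; the matrices $\bm{K}_\Omega$, $\bm{R}$, and $\bm{Q}$ are fixed, state-independent quantities. So the whole argument reduces to pushing $\mathbb{E}_\mathcal{M}$ through a finite linear combination of deterministic matrices and one random matrix.

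Concretely, I would split into the two cases. For $\tau>0$, set $t'=t+\tau>t$ in Lemma~\ref{lem:p_h} to get $\bm{H}_{t+\tau,t} = \bm{K}_\Omega^{\tau-1}\bigl(\bm{K}_\Omega \bm{G}_{t,t} + \bm{R}\bigr)$. Applying $\mathbb{E}_\mathcal{M}$ and using linearity of expectation together with the fact that $\bm{K}_\Omega$ and $\bm{R}$ are constant, this becomes $\bm{K}_\Omega^{\tau-1}\bigl(\bm{K}_\Omega\,\mathbb{E}_\mathcal{M}[\bm{G}_{t,t}] + \bm{R}\bigr)$; then invoking Assumption~\ref{asmp:quasi}, namely $\mathbb{E}_\mathcal{M}[\bm{G}_{t,t}] = \bm{G}$, yields $\bm{H}_\tau = \bm{K}_\Omega^{\tau-1}(\bm{K}_\Omega\bm{G} + \bm{R})$. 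For $\tau=0$, the diagonal case of Lemma~\ref{lem:p_h} gives $\bm{H}_{t,t} = \bm{G}_{t,t} + \bm{Q}$, and taking $\mathbb{E}_\mathcal{M}$ in the same way gives $\bm{H}_0 = \bm{G} + \bm{Q}$.

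The only point that needs a word of care is the legitimacy of exchanging expectation and the finite matrix operations, which is justified because each entry of $\bm{G}_{t,t}$ is integrable over $\mathcal{M}$: Assumption~\ref{asmp:quasi} guarantees $\Vert\bm{G}_{t,t}\Vert_F<\infty$ for almost all $\bm{x}_0$ and that the average $\mathbb{E}_\mathcal{M}[\bm{G}_{t,t}]=\bm{G}$ exists and is finite. With that in hand there is no genuine obstacle — the statement is a corollary in the literal sense, obtained purely by taking $\mathbb{E}_\mathcal{M}$ of the case-by-case formula already proved in Lemma~\ref{lem:p_h}. I would therefore keep the written proof to a couple of lines, citing Lemma~\ref{lem:p_h} and Assumption~\ref{asmp:quasi}, exactly parallel to how the corollary to Lemma~\ref{lem:p_g} is handled.
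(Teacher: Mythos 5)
Your proof is correct and matches what the paper intends: the corollary is stated without a separate proof precisely because it follows by applying $\mathbb{E}_\mathcal{M}$ to the case-by-case formula of Lemma~\ref{lem:p_h} and invoking $\mathbb{E}_\mathcal{M}[\bm{G}_{t,t}]=\bm{G}$ from Assumption~\ref{asmp:quasi}, exactly as you do. Your added remark on the integrability justifying the interchange is a reasonable (if minor) point of extra care.
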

%

\subsection{Subspace DMD}

Finally, we introduce a numerical method to compute an instance of the stochastic Koopman operator given noisy observations, namely, {\em subspace DMD}.
Analogously to Eq.~\eqref{eq:datapair}, let us define the data matrix as a concatenation of $m$ observations starting at time $t$, i.e.,
\begin{equation}
\bm{Y}_t = \begin{bmatrix} \bm{h}(\bm{x}_t) & \dots & \bm{h}(\bm{x}_{t+m-1}) \end{bmatrix}\in\mathbb{C}^{n \times m}.
\end{equation}
Then, using a data quadruple $(\bm{Y}_0,\bm{Y}_1,\bm{Y}_2,\bm{Y}_3)$, we can obtain a calculation for $K_\Omega$ using the following theorem.
\begin{theorem}\label{thm:subspace}
Define $\bm{Y}_p,\bm{Y}_f\in\mathbb{C}^{2n \times m}$ by
\begin{equation}\label{eq:datafp}
\bm{Y}_p=\begin{bmatrix}\bm{Y}_0^\tr & \bm{Y}_1^\tr\end{bmatrix}^\tr,\quad
\bm{Y}_f=\begin{bmatrix}\bm{Y}_2^\tr & \bm{Y}_3^\tr\end{bmatrix}^\tr,
\end{equation}
and let $\bm{O}=\bm{Y}_f\mathbb{P}_{\bm{Y}_p^\ct}\in\mathbb{C}^{2n \times m}$ be the orthogonal projection of rows of $\bm{Y}_f$ onto the row space of $\bm{Y}_p$.
Further, consider a compact SVD
\begin{equation}\label{eq:thm:svd}
\bm{O}=\bm{U}_q\bm{S}_q\bm{V}_q^\ct
\end{equation}
with $\bm{U}_q\in\mathbb{C}^{2n \times q}$, $\bm{S}_q\in\mathbb{C}^{q \times q}$, and $\bm{V}_q\in\mathbb{C}^{m \times q}$, where $q=\operatorname{rank}(\bm{O})$.
Moreover, let $\bm{U}_{q1}$ be the first $n$ rows and $\bm{U}_{q2}$ be the last $n$ rows of $\bm{U}_q$.
If $\operatorname{rank}(\bm{Y}_p)=2n$ and $\operatorname{rank}(\bm{K}_\Omega\bm{G}+\bm{R})=n$, then in $m\to\infty$,
\begin{equation}\label{eq:mainconv}
\bm{U}_{q2}\bm{U}_{q1}^\dagger \to \bm{K}_\Omega
\end{equation}
with probability one.
\end{theorem}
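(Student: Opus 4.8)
The plan is to push everything onto large-sample limits of empirical second-moment matrices and then read off $\bm K_\Omega$ from a rank-$n$ factorization of the limit of $\bm O$. Because $\operatorname{rank}(\bm Y_p)=2n$, the projector is $\mathbb{P}_{\bm Y_p^\ct}=\bm Y_p^\ct(\bm Y_p\bm Y_p^\ct)^{-1}\bm Y_p$, so $\bm O=\bm Y_f\bm Y_p^\ct(\bm Y_p\bm Y_p^\ct)^{-1}\bm Y_p$ and, $\mathbb{P}_{\bm Y_p^\ct}$ being an orthogonal projector, $\tfrac1m\bm O\bm O^\ct=\bigl(\tfrac1m\bm Y_f\bm Y_p^\ct\bigr)\bigl(\tfrac1m\bm Y_p\bm Y_p^\ct\bigr)^{-1}\bigl(\tfrac1m\bm Y_p\bm Y_f^\ct\bigr)$. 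Each block is a time average $\tfrac1m\bm Y_{t'}\bm Y_t^\ct=\tfrac1m\sum_j\bm h(\bm x_{t'+j})\bm h(\bm x_{t+j})^\ct$; by the ergodic theorem for the RDS (extending Assumption~\ref{asmp:ergodicity1}), the law of large numbers, and Assumptions~\ref{asmp:e_cov}--\ref{asmp:quasi}, it converges almost surely to the lag moment $\bm H_{t'-t}$ given by Lemma~\ref{lem:p_h} and its corollary. With $\bm C\coloneqq\bm K_\Omega\bm G+\bm R$ (so $\bm H_1=\bm C$, $\bm H_2=\bm K_\Omega\bm C$, $\bm H_3=\bm K_\Omega^2\bm C$, $\bm H_0=\bm G+\bm Q$, $\bm H_{-1}=\bm C^\ct$), this gives $\tfrac1m\bm Y_f\bm Y_p^\ct\to\bm\Phi_{fp}\coloneqq\begin{bmatrix}\bm K_\Omega\bm C&\bm C\\\bm K_\Omega^2\bm C&\bm K_\Omega\bm C\end{bmatrix}$ and $\tfrac1m\bm Y_p\bm Y_p^\ct\to\bm\Phi_{pp}\coloneqq\begin{bmatrix}\bm H_0&\bm C^\ct\\\bm C&\bm H_0\end{bmatrix}$, the latter --- the second-moment matrix of $[\bm h(\bm x_0)^\tr\ \bm h(\bm x_1)^\tr]^\tr$ --- being nonsingular, hence positive definite, by persistency of excitation ($\operatorname{rank}(\bm Y_p)=2n$).

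The structural point is the rank-$n$ factorization $\bm\Phi_{fp}=\bm\Gamma\begin{bmatrix}\bm K_\Omega\bm C&\bm C\end{bmatrix}$ with $\bm\Gamma\coloneqq\begin{bmatrix}\bm I_n\\\bm K_\Omega\end{bmatrix}$; here $\operatorname{rank}\bm\Gamma=n$ (from the identity block) and $\operatorname{rank}\begin{bmatrix}\bm K_\Omega\bm C&\bm C\end{bmatrix}=n$ exactly, since $\operatorname{rank}\bm C=\operatorname{rank}(\bm K_\Omega\bm G+\bm R)=n$ by hypothesis. Hence $\tfrac1m\bm O\bm O^\ct\to\bm\Pi\coloneqq\bm\Phi_{fp}\bm\Phi_{pp}^{-1}\bm\Phi_{fp}^\ct=\bm\Gamma\bm M\bm\Gamma^\ct$ with $\bm M\coloneqq\begin{bmatrix}\bm K_\Omega\bm C&\bm C\end{bmatrix}\bm\Phi_{pp}^{-1}\begin{bmatrix}\bm K_\Omega\bm C&\bm C\end{bmatrix}^\ct$ Hermitian positive definite of size $n\times n$ (as $\begin{bmatrix}\bm K_\Omega\bm C&\bm C\end{bmatrix}$ has full row rank $n$ and $\bm\Phi_{pp}$ is positive definite). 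Thus $\bm\Pi$ is positive semidefinite of rank exactly $n$, with range $\operatorname{col}\bm\Gamma=\{(\bm v,\bm K_\Omega\bm v):\bm v\in\mathbb{C}^n\}$, the graph of $\bm K_\Omega$.

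Now bring in the SVD. Read $q$ as the rank $n$ of this limit (the observation noise makes $\bm O$ rank $2n$ at finite $m$, so in practice one keeps the $n$ dominant singular triples) and let $\bm U_q$ collect the $n$ dominant left singular vectors of $\bm O$, i.e.\ the top-$n$ eigenvectors of $\tfrac1m\bm O\bm O^\ct$. Because $\bm\Pi$ has a spectral gap between its $n$ positive eigenvalues and $0$ --- precisely what $\operatorname{rank}(\bm K_\Omega\bm G+\bm R)=n$ buys --- the spectral projector onto the top-$n$ eigenspace is continuous at $\bm\Pi$, so $\bm U_q\bm U_q^\ct\to\mathbb{P}_{\operatorname{col}\bm\Gamma}$ almost surely, equivalently $\bm U_q=\bar{\bm U}\bm W_m+o(1)$ for $\bar{\bm U}\coloneqq\bm\Gamma(\bm\Gamma^\ct\bm\Gamma)^{-1/2}$ (an orthonormal basis of $\operatorname{col}\bm\Gamma$) and some unitary $\bm W_m$. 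Partitioning $\bar{\bm U}=\begin{bmatrix}(\bm\Gamma^\ct\bm\Gamma)^{-1/2}\\\bm K_\Omega(\bm\Gamma^\ct\bm\Gamma)^{-1/2}\end{bmatrix}$ according to the block form of $\bm\Gamma$, its top block is square and invertible, so $\bar{\bm U}_2\bar{\bm U}_1^\dagger=\bm K_\Omega(\bm\Gamma^\ct\bm\Gamma)^{-1/2}(\bm\Gamma^\ct\bm\Gamma)^{1/2}=\bm K_\Omega$; the unitary factor cancels in $\bar{\bm U}_2\bm W_m(\bar{\bm U}_1\bm W_m)^\dagger$, and $\bm U\mapsto\bm U_2\bm U_1^\dagger$ is continuous at $\bar{\bm U}$ since its top block is invertible there. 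Therefore $\bm U_{q2}\bm U_{q1}^\dagger\to\bm K_\Omega$ with probability one.

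The step I expect to be the real work is this last perturbation argument: upgrading the entrywise limit $\tfrac1m\bm O\bm O^\ct\to\bm\Pi$ to convergence of the dominant left-singular subspace of $\bm O$, which needs a Davis--Kahan / $\sin\Theta$ estimate keyed to the gap supplied by $\operatorname{rank}(\bm K_\Omega\bm G+\bm R)=n$, together with the bookkeeping that the finite-sample $\bm O$ is full rank while its limit has rank $n$ (so $q$ must be read as the essential rank), and with the verification that $\bm\Phi_{pp}$ is genuinely nonsingular --- the point where $\operatorname{rank}(\bm Y_p)=2n$ is used. The ergodic limits of the block moments and the factorization algebra are, by comparison, routine given Lemma~\ref{lem:p_h} and its corollary.
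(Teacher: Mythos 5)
Your proposal is correct and takes essentially the same approach as the paper: the same almost-sure limits of the block second moments $\tfrac1m\bm Y_{t'}\bm Y_t^\ct\to\bm H_{t'-t}$ from Lemma~\ref{lem:p_h} and its corollary, the same rank-$n$ factorization of the limiting projection through the factor $\begin{bmatrix}\bm I & \bm K_\Omega^\tr\end{bmatrix}^\tr$ (your $\bm\Gamma$, the paper's $\bm O_1$), and the same use of both rank hypotheses. The only divergence is the final extraction step, where you pass to $\tfrac1m\bm O\bm O^\ct$ and invoke continuity of the dominant spectral projector (noting, more carefully than the paper, that the finite-sample $q$ is generically $2n$ and must be read as the essential rank $n$), while the paper matches the compact SVD of $\bm O$ against the factorization $\bm O_1\bm O_2$ directly; both land on $\operatorname{col}\bm U_q\to\operatorname{col}\bm O_1$ and hence $\bm U_{q2}\bm U_{q1}^\dagger\to\bm K_\Omega$.
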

\begin{proof}
Let $\hat{\bm{H}}$ be the empirical matrix such that
\begin{equation*}
\hat{\bm{H}}_{t+\tau,t} = \frac1m \bm{Y}_{t+\tau} \bm{Y}_t^\ct.
\end{equation*}
In $m\to\infty$, $\hat{\bm{H}}_{t+\tau,t}$ converges to $\bm{H}_\tau$ with probability one for all $t\in\mathbb{T}$ and $\tau \geq 0$, because, from the law of large numbers and the assumption of ergodicity,
\begin{equation*}
\begin{aligned}
\frac1m \bm{Y}_{t+\tau} \bm{Y}_t^\ct
&= \frac1m \sum_{j=t}^{t+m-1} \bm{h}(\bm{x}_{j+\tau})  \bm{h}(\bm{x}_{j})^\ct \\
&\to \int_{\mathcal{M} \times \Omega \times S} \bm{h}(\bm{x}_{t+\tau})\bm{h}(\bm{x}_t)^\ct d\mu_\mathcal{M} d\mu_\Omega d\mu_S \\
&= \bm{H}_\tau.
\end{aligned}
\end{equation*}
Because we have assumed $\operatorname{rank}(\bm{Y}_p)=2n$, in $m\to\infty$,
\begin{equation}\label{eq:o_decomp}
\begin{aligned}
\bm{O}
&= \bm{Y}_f \bm{Y}_p^\ct \left( \bm{Y}_p\bm{Y}_p^\ct \right)^{-1} \bm{Y}_p \\
&= \begin{bmatrix} \hat{\bm{H}}_{2,0} & \hat{\bm{H}}_{2,1} \\ \hat{\bm{H}}_{3,0} & \hat{\bm{H}}_{3,1} \end{bmatrix} \begin{bmatrix} \hat{\bm{H}}_{0,0} & \hat{\bm{H}}_{0,1} \\ \hat{\bm{H}}_{1,0} & \hat{\bm{H}}_{1,1} \end{bmatrix}^{-1} \bm{Y}_p \\
&\to \begin{bmatrix} \bm{H}_2 & \bm{H}_1 \\ \bm{H}_3 & \bm{H}_2 \end{bmatrix} \begin{bmatrix} \bm{H}_0 & \bm{H}_1^\ct \\ \bm{H}_1 & \bm{H}_0 \end{bmatrix}^{-1} \bm{Y}_p \\
&= \begin{bmatrix} \bm{I} \\ \bm{K}_\Omega \end{bmatrix} \begin{bmatrix} \bm{K}_\Omega\bm{D} & \bm{D} \end{bmatrix} \begin{bmatrix} \bm{G}+\bm{Q} & \bm{D}^\ct \\ \bm{D} & \bm{G}+\bm{Q} \end{bmatrix}^{-1} \bm{Y}_p \\
&= \bm{O}_1 \bm{O}_2
\end{aligned}
\end{equation}
with probability one, where $\bm{D}=\bm{K}_\Omega\bm{G}+\bm{R}\in\mathbb{C}^{n \times n}$ and
\begin{equation*}
\begin{aligned}
\bm{O}_1 &= \begin{bmatrix} \bm{I} \\ \bm{K}_\Omega \end{bmatrix},\\
\bm{O}_2 &= \bm{D}^\tr \begin{bmatrix} \bm{K}_\Omega \\ \bm{I} \end{bmatrix}^\tr \begin{bmatrix} \bm{G}+\bm{Q} & \bm{D}^\ct \\ \bm{D} & \bm{G}+\bm{Q} \end{bmatrix}^{-1} \bm{Y}_p.
\end{aligned}
\end{equation*}
Because we have assumed $\operatorname{rank}(\bm{D})=n$, the rank of both $\bm{O}_1$ and $\bm{O}_2$ is $n$. Hence, in $m\to\infty$, $q$ also becomes $n$. Remember that by compact SVD~\eqref{eq:thm:svd}, we have the decomposition of $\bm{O}$ into two rank-$n$ matrices, i.e.,
\begin{equation*}
\bm{O}=\left(\bm{U}_q\bm{S}_q^{1/2}\right)\left(\bm{S}_q^{1/2}\bm{V}_q^\ct\right).
\end{equation*}
Therefore, from Eq.~\eqref{eq:o_decomp}, in $m\to\infty$, we have
\begin{equation*}
\bm{U}_q\bm{S}_q^{1/2} \to \bm{O}_1\bm{T} =
\begin{bmatrix} \bm{T} \\ \bm{K}_\Omega\bm{T} \end{bmatrix}
\end{equation*}
with probability one, where $\bm{T}\in\mathbb{C}^{n \times n}$ is an arbitrary unitary matrix. Consequently, $\bm{U}_{q1}$ and $\bm{U}_{q2}$ become $\bm{T}$ and $\bm{K}_\Omega\bm{T}$ respectively, and Eq.~\eqref{eq:mainconv} holds.
\end{proof}

Based on Theorem~\ref{thm:subspace}, we present a {\em subspace DMD} algorithm as follows.
\begin{algorithm}[Subspace DMD]\label{alg:sdmd}\leavevmode
\begin{enumerate}
\item Build matrices $\bm{Y}_p$ and $\bm{Y}_f$ like Eq.~\eqref{eq:datafp} from a data quadruple $(\bm{Y}_0,\bm{Y}_1,\bm{Y}_2,\bm{Y}_3)$.
\item Compute orthogonal projection $\bm{O}=\bm{Y}_f\mathbb{P}_{\bm{Y}_p^\ct}$.
\item\label{alg:sdmd:svd2} Compute compact SVD $\bm{O}=\bm{U}_q\bm{S}_q\bm{V}_q^\ct$ and define $\bm{U}_{q1}$ and $\bm{U}_{q2}$ by the first and the last $n$ rows of $\bm{U}_q$, respectively.
\item Compute compact SVD $\bm{U}_{q1} = \bm{U}\bm{S}\bm{V}^\ct$ and define $\tilde{\bm{A}}=\bm{U}^\ct\bm{U}_{q2}\bm{V}\bm{S}^{-1}$.
\item Compute the eigenvalues $\lambda$ and eigenvectors $\tilde{\bm{w}}$ of $\tilde{\bm{A}}$.
\item Return dynamic modes $\bm{w}=\lambda^{-1}\bm{U}_{q2}\bm{V}\bm{S}^{-1}\tilde{\bm{w}}$ and corresponding eigenvalues $\lambda$.
\end{enumerate}
\end{algorithm}
\begin{remark}
With subspace DMD, we can naturally conduct a low-rank approximation of dynamics by replacing the compact SVD in Step~\ref{alg:sdmd:svd2} with a truncated SVD. In contrast, in Algorithm~\ref{alg:dmd} and total-least-squares DMD \cite{Dawson16,Hemati17}, the low-rank approximation is achieved via the truncated proper orthogonal decomposition (POD). Note that there is also a line of research on the low-rank approximation of DMD, such as \cite{Chen12,Wynn13,Jovanovic14,Heas17}.
\end{remark}
\begin{remark}
Again note that we suppose that data are obtained with observable $\bm{g}$ that spans a subspace invariant to the Koopman operator, as in Algorithm~\ref{alg:dmd} and other popular variants of DMD. See Section~\ref{sec:discussion} for ways to design such observables.
\end{remark}
%

\section{Numerical Examples}\label{sec:example}

We present numerical examples for the application of subspace DMD to several types of dynamical systems to show its empirical performance. When describing target dynamical systems in the following examples, we denote Gaussian white process noise by $\bm{e}$ and Gaussian white observation noise by $\bm{w}$. The standard deviation of the process noise is referred to as $\sigma_p$ and that of the observation noise as $\sigma_o$. Moreover, we denote the number of snapshots fed into algorithms by $m$ and the dimensionality of the data by $n$.

\begin{figure}[t]
\centering
\subfloat[]{\includegraphics[clip,width=0.49\textwidth]{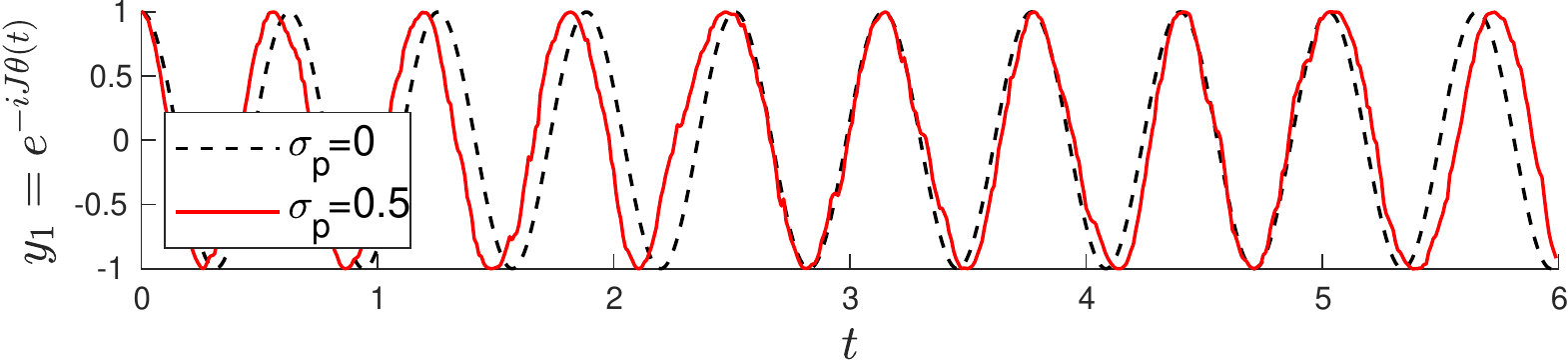}\label{fig:exp_sl_data}}
\\
\subfloat[]{\includegraphics[clip,width=0.235\textwidth]{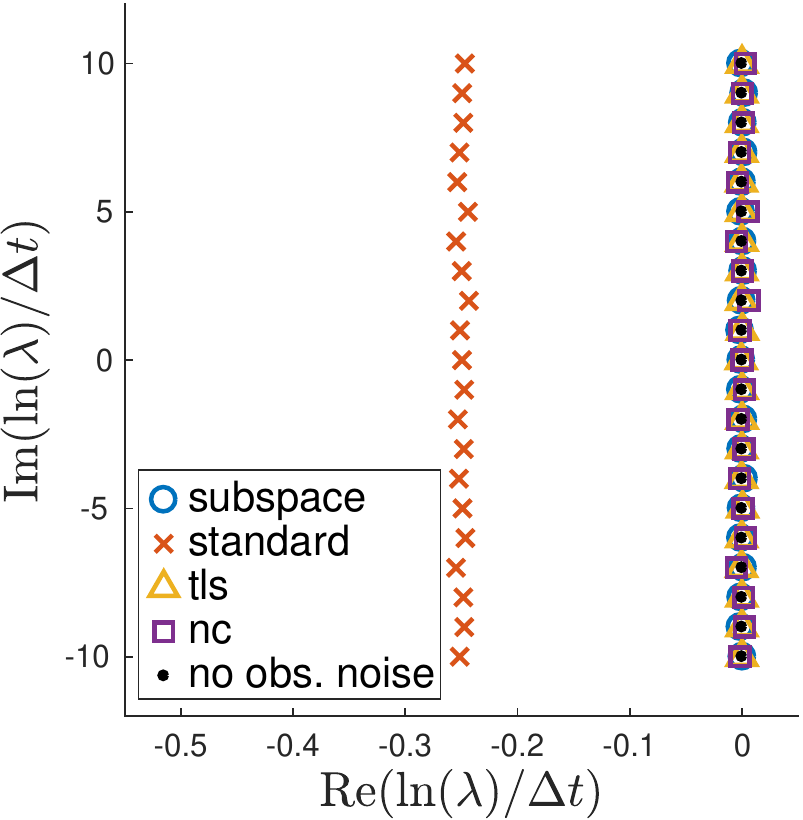}\label{fig:exp_sl_a}}
\hfill
\subfloat[]{\includegraphics[clip,width=0.235\textwidth]{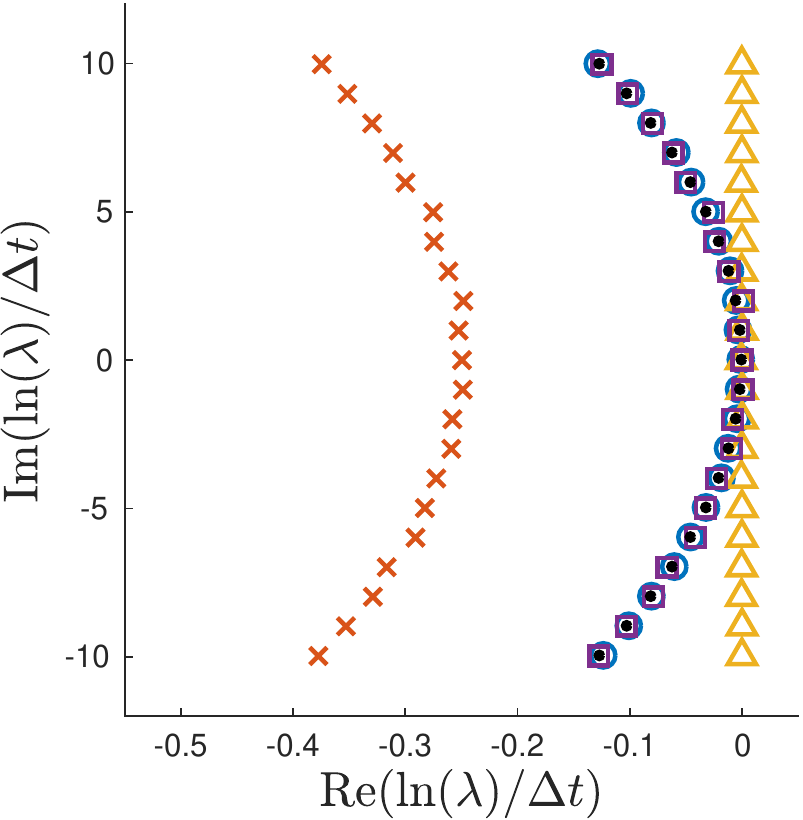}\label{fig:exp_sl_b}}
\caption{(a) Data generated by the noise-free/noisy Stuart--Landau equation and trigonometric observables, which show the phase diffusion when $\sigma_p>0$ (the solid red line). (b--c) The estimated continuous-time eigenvalues, with (b) $\sigma_p=0$ and (c) $\sigma_p=0.5$. Subspace DMD eliminates the effects of the observation noise, keeping the effects of the process noise.}
\label{fig:exp_sl}
\end{figure}
\begin{figure}[t]
\centering
\subfloat[]{\includegraphics[clip,width=0.235\textwidth]{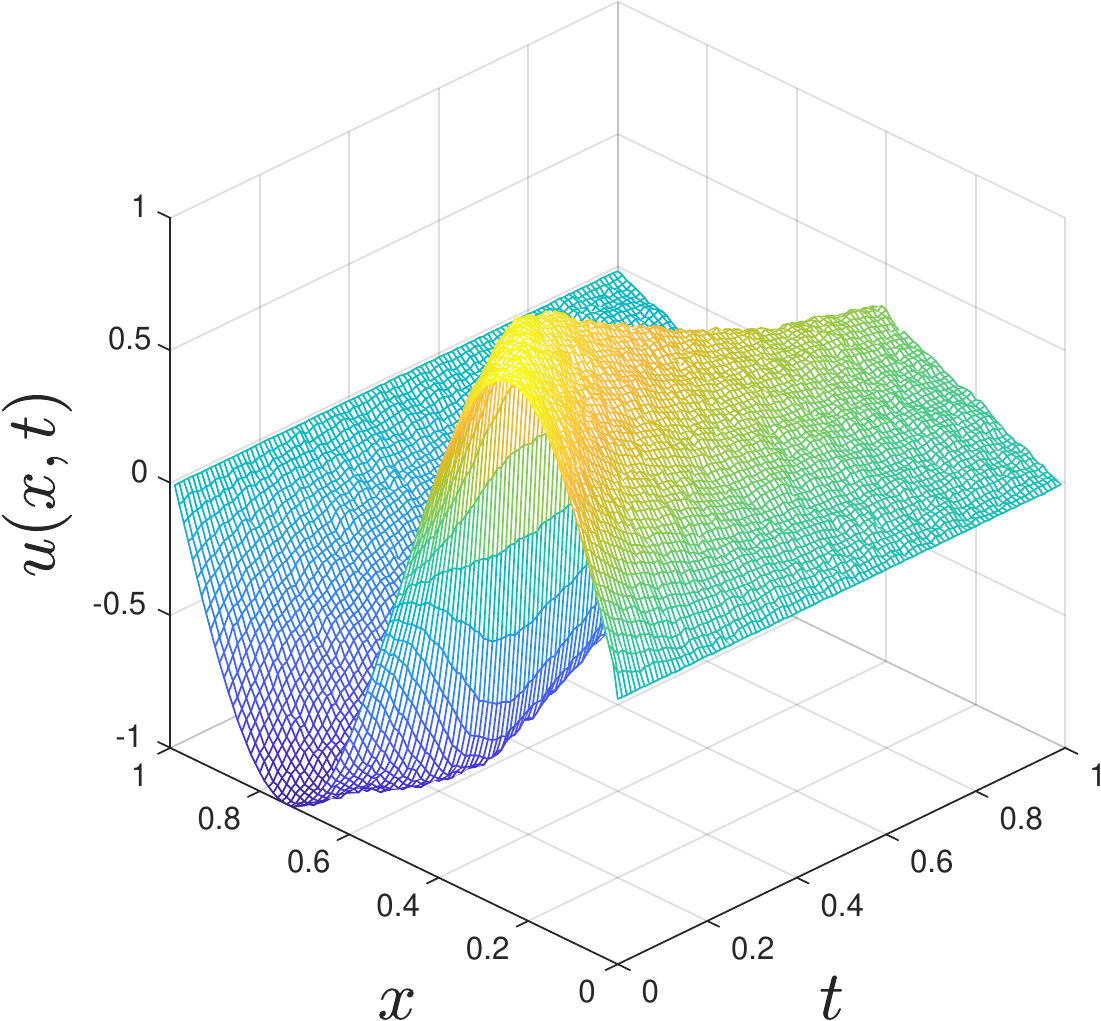}\label{fig:exp_sburgers_data}}
\hfill
\subfloat[]{\includegraphics[clip,width=0.235\textwidth]{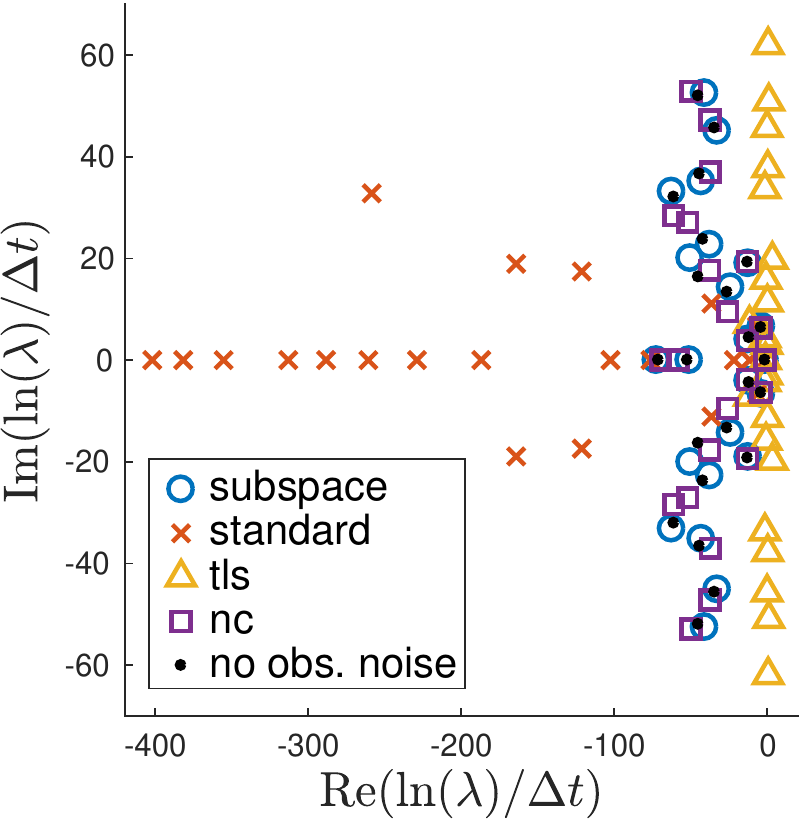}\label{fig:exp_sburgers_result}}
\caption{(a) Data generated by the stochastic Burger's equation with observation noise and (b) the estimated continuous-time eigenvalues. The eigenvalues estimated by subspace DMD agree well with the ones computed with the clean data.}
\label{fig:exp_sburgers}
\end{figure}
%

\subsection{Oscillation perturbed by noise}

The stochastic Stuart--Landau equation on a complex-valued function $z(t)=r(t) \exp(i\theta(t))$ is defined as
\begin{equation*}
\frac{\mathrm{d}z}{\mathrm{d}t} = (\mu+i\gamma)z - (1+i\beta)\vert z \vert^2 z + \sigma_p e(t),
\end{equation*}
where $e(t)$ is Gaussian white noise with unit variance, and $i$ denotes the imaginary unit.
The solution of this equation evolves on a limit cycle at $\vert z \vert=\sqrt{\mu}$ in the absence of process noise (i.e., $\sigma_p=0$).
Bagheri~\cite{Bagheri14} has analyzed the effects of weak process noise on the Koopman eigenvalues of the limit cycle of the Stuart--Landau equation, which can be summarized as follows; the continuous-time eigenvalues lie on the imaginary axis if process noise is absent because the data are completely periodic, but in contrast, when perturbation (phase diffusion, as shown in  Figure~\ref{fig:exp_sl_data}) is present owing to the process noise, a line of the eigenvalues is ``bent''  as shown in Figure~\ref{fig:exp_sl_b}.
Hence, by investigating the distribution of the eigenvalues, one can anticipate the presence and magnitude of the phase diffusion. To this end, we must eliminate the effects of observation noise if any, which produces an extra bias on the eigenvalues, leaving the effects of process noise.

Following the scheme in \cite{Dawson16}, we generated data using the following discretized Stuart--Landau equation in polar-coordinates with process noise:
\begin{equation*}
\begin{bmatrix} r_{t+1} \\ \theta_{t+1} \end{bmatrix} = \begin{bmatrix} r_t + \left( \mu r_t - r_t^3 \right) \Delta t \\ \theta_t + \left( \gamma - \beta r_t^2 \right) \Delta t \end{bmatrix} + \begin{bmatrix} \Delta t & 0 \\ 0 & \Delta t / r_t \end{bmatrix} \bm{e}_t,
\end{equation*}
and a set of noisy trigonometric observables:
\begin{equation*}
\bm{y}_t = \begin{bmatrix} e^{-10i\theta_t} & e^{-9i\theta_t} & \cdots & e^{9i\theta_t} & e^{10i\theta_t} \end{bmatrix} + \bm{w}_t,
\end{equation*}
where the magnitude of the observation noise was fixed to $\sigma_o=0.05$.
We estimated the continuous-time eigenvalues using subspace DMD (Algorithm~\ref{alg:sdmd}), standard DMD (Algorithm~\ref{alg:dmd}), total-least-squares DMD (tls-DMD) \cite{Dawson16,Hemati17}, and noise-corrected DMD (nc-DMD) \cite{Dawson16}.

Figure~\ref{fig:exp_sl_a} shows the eigenvalues without any process noise (i.e., $\sigma_p=0$), and Figure~\ref{fig:exp_sl_b} shows the ones with process noise of $\sigma_p=0.5$. In both plots, we also show the ``clean'' eigenvalues computed with the data without the observation noise.
When the process noise is present (in Figure~\ref{fig:exp_sl_b}), while the eigenvalues estimated by tls-DMD differ from the clean ones (as reported in \cite{Dawson16}), subspace DMD successfully estimates them. Note that the estimation by nc-DMD also coincides with the clean eigenvalues, but nc-DMD needs a precise estimation of magnitude of observation noise, which is often difficult to obtain. Subspace DMD can eliminate the effects of observation noise without such information while {\em keeping the effects of the process noise}.

\begin{figure}[t]
\centering
\subfloat[]{\includegraphics[clip,width=0.235\textwidth]{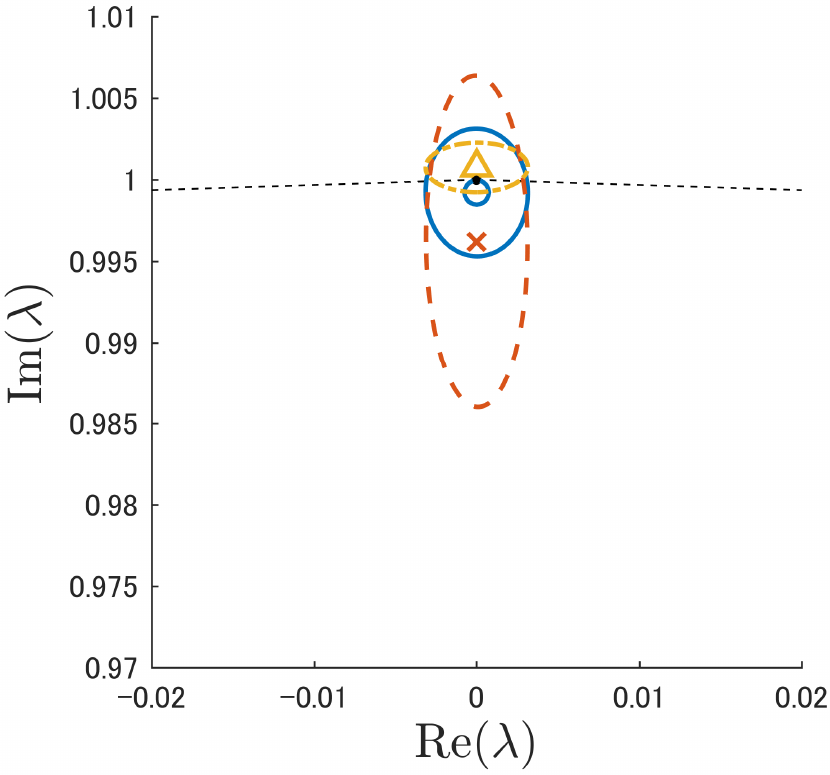}}
\hfill
\subfloat[]{\includegraphics[clip,width=0.233\textwidth]{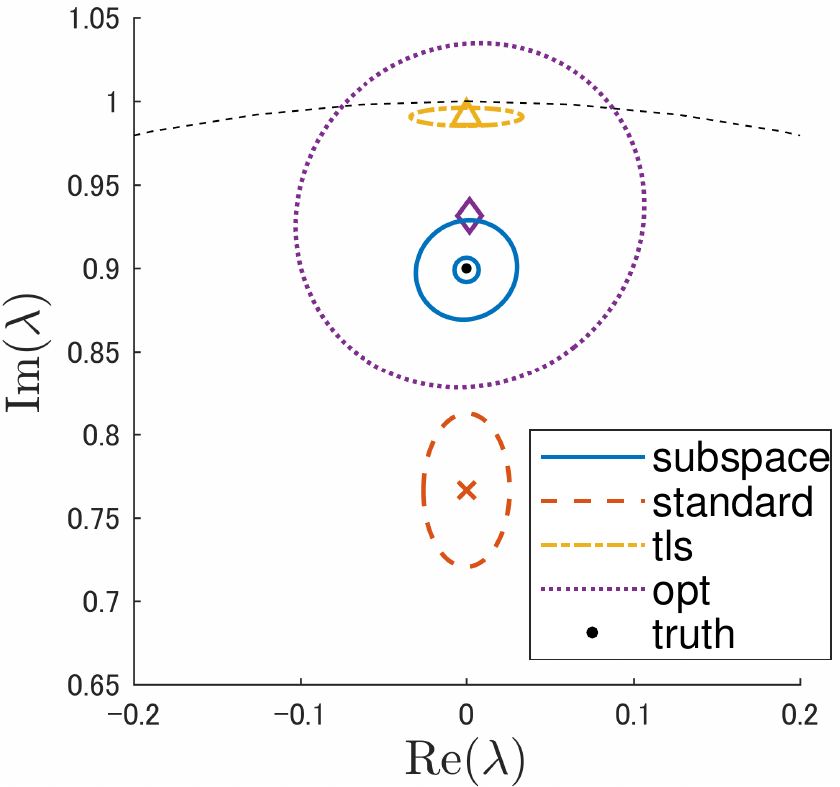}}
\caption{The 95\% confidence intervals and averages of the eigenvalues estimated by subspace DMD, standard DMD, tls-DMD, and opt-DMD on the linear systems, (a) $r=1.0$ and (b) $r=0.9$, with process and observation noises for 1,000 random trials. When $r=0.9$, only subspace DMD shows consistent results.}
\label{fig:exp_linear_1}
\end{figure}
\begin{figure*}[t]
\centering
\begin{minipage}{0.48\textwidth}
\centering
\subfloat[]{\includegraphics[clip,width=0.49\textwidth]{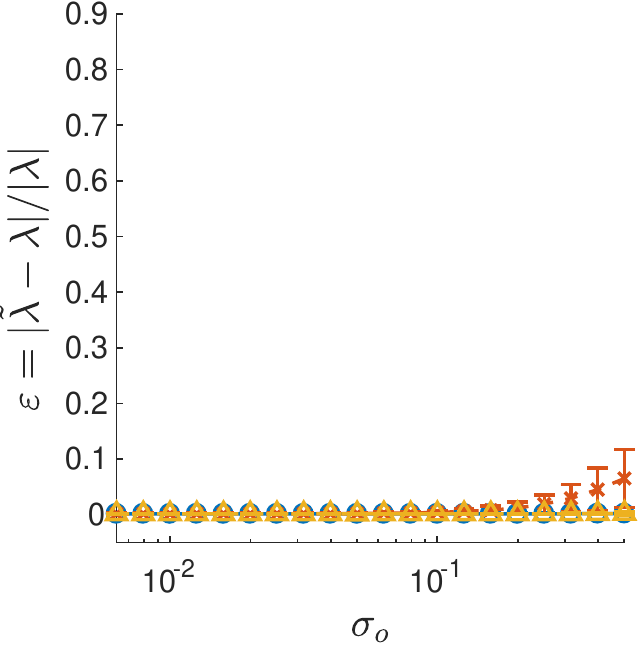}}
\hfill
\subfloat[]{\includegraphics[clip,width=0.49\textwidth]{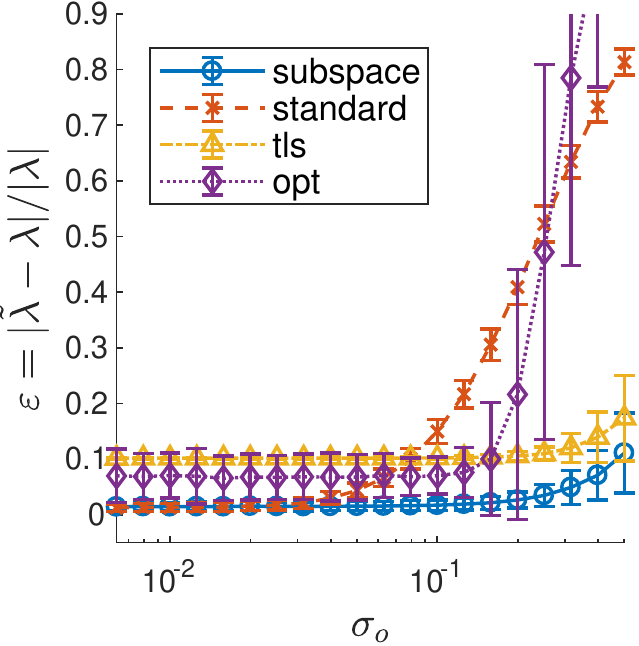}}
\caption{Relative errors of the eigenvalues estimated by subspace DMD, standard DMD, tls-DMD, and opt-DMD on the linear systems, (a) $r=1.0$ and (b) $r=0.9$, with process and observation noises against different magnitudes of observation noise $\sigma_o$. When $r=0.9$, subspace DMD produces much smaller errors compared to the other two methods.}
\label{fig:exp_linear_2}
\end{minipage}
\hspace{0.02\textwidth}
\begin{minipage}{0.48\textwidth}
\centering
\subfloat[$r=1.0$]{\includegraphics[clip,width=0.49\textwidth]{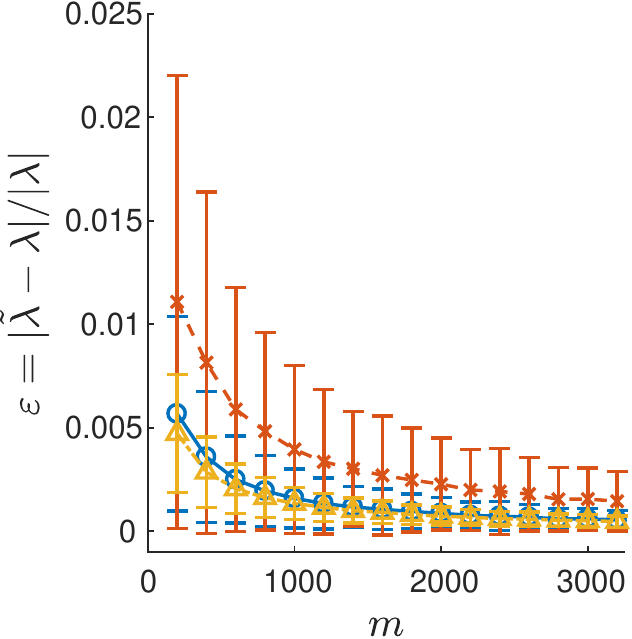}}
\hfill
\subfloat[$r=0.9$]{\includegraphics[clip,width=0.49\textwidth]{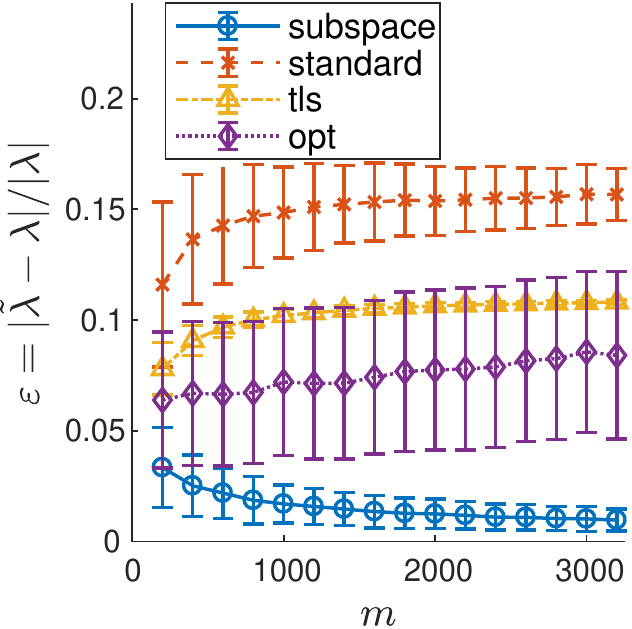}}
\caption{Relative errors of the eigenvalues estimated by subspace DMD, standard DMD, tls-DMD, and opt-DMD on the linear systems, (a) $r=1.0$ and (b) $r=0.9$, with process and observation noises against different numbers of snapshots $m$ fed into the algorithms. When $r=0.9$, only the output of subspace DMD converges to the true value.}
\label{fig:exp_linear_3}
\end{minipage}
\end{figure*}
%

\subsection{Noisy damping modes}

Let us consider the stochastic Burger's equation
\begin{equation*}
\partial_tu(x,t) + u\partial_x u = k\partial^2_xu + \sigma_p e(x,t)
\end{equation*}
with $k>0$ and Gaussian space-time white noise $e(x,t)$.
In fact, the eigenvalues of the Koopman operator on Burger's equation (without process noise, i.e., $\sigma_p=0$) can be analytically obtained via the Cole--Hopf transformation and they correspond to the decaying modes of the solution \cite{Budisic12,Kutz16b}.
When process noise is present ($\sigma_p>0$), the solution of Burger's equation becomes ``rough,'' but its global appearance remains similar to the case of no process noise, as shown in Figure~\ref{fig:exp_sburgers_data}.

We approximated the solution of the stochastic Burger's equation with $k=0.01$ and $\sigma_p=0.01$ using Crank--Nicolson--Maruyama method (see, e.g., \cite{Hausenblas03}) with initial condition $u(x,0)=\sin(2 \pi x)$ and Dirichlet boundary condition $u(0,t)=u(1,t)=0$, setting the ranges by $x \in \left[ 0,1 \right]$ and $t \in \left[ 0,1 \right]$ and the discretization step sizes by $\Delta x=1 \times 10^{-2}$ and $\Delta t = 5 \times 10^{-5}$.
Based on the approximated solution, we finally generated data with observation noise
\begin{equation*}
\bm{y}_t = \begin{bmatrix} u(0,t) & u(\Delta x,t) & u(2\Delta x,t) & \dots & u(1,t) \end{bmatrix}^\tr + \bm{w}_t,
\end{equation*}
where the magnitude of the observation noise was set $\sigma_o=0.001$.
The estimated eigenvalues are plotted in Figure~\ref{fig:exp_sburgers_result}. While the eigenvalues obtained by tls-DMD lie approximately on the imaginary axis, the estimation by subspace DMD agrees well with the eigenvalues computed with data that contain no observation noise. Again note that, though the estimation by nc-DMD also aligns with the clean eigenvalues, it requires a precise estimation of observation noise magnitude.

\subsection{Quantitative investigation of effects of noises}

Let us investigate the performance of subspace DMD quantitatively using a simple linear system. We generated data using a linear time-invariant system
\begin{equation*}
\bm{x}_t = \begin{bmatrix} \lambda & 0 \\ 0 & \bar{\lambda} \end{bmatrix}\bm{x}_{t-1} + \bm{e}_t,\quad
\lambda=ri,
\end{equation*}
whose Koopman eigenvalues obviously contain $\lambda$ and $\bar{\lambda}$. Moreover, we used the identity observable with observation noise
\begin{equation*}
\bm{y}_t = \bm{x}_t + \bm{w}_t.
\end{equation*}
We fixed the standard deviation of the process noise to $\sigma_p=0.1$, the eigenvalue to $\lambda=ri$ with $r=1.0$ or $0.9$, and the initial state to $\bm{x}_0=\begin{bmatrix}1&1\end{bmatrix}^\tr$. Hence, this system exhibits oscillation perpetuated by the process noise when $r=1.0$ and is damped while being excited by the process noise when $r=0.9$.
We applied subspace DMD, standard DMD, tls-DMD, and optimized DMD (opt-DMD) \cite{Chen12} to multiple datasets generated with different random seeds. In those experiments, we have found that opt-DMD is unstable when $r=1.0$ and it does not output much reasonable results because it needs to compute exponentials of eigenvalues. Hence, the results of opt-DMD when $r=1.0$ are not plotted in Figures~\ref{fig:exp_linear_1}, \ref{fig:exp_linear_2}, and \ref{fig:exp_linear_3}.

In Figure~\ref{fig:exp_linear_1}, we show the 95\% confidence intervals of the estimated eigenvalues for 1,000 random trials with observation noise of magnitude $\sigma_o=0.1$ and $m=$~1,000 snapshots. When $r=1.0$, the estimations by subspace DMD, standard DMD, and tls-DMD scatter around the true value, while the results of standard DMD deviate a little more than the others. When $r=0.9$, the estimations by standard DMD, tls-DMD, and opt-DMD deviate from the true value; only the outputs of subspace DMD distribute around the true value.

In Figure~\ref{fig:exp_linear_2}, the relative errors $\varepsilon = \vert \tilde{\lambda} - \lambda \vert / \vert \lambda \vert$ of estimated eigenvalues $\tilde{\lambda}$ are plotted against different magnitudes of the observation noise $\sigma_o$, with the number of snapshots fed into the algorithms being fixed by $m=$~1,000. When $r=1.0$, subspace DMD, standard DMD, and tls-DMD work almost equally well. When $r=0.9$, while the errors of standard DMD and opt-DMD rapidly grow and tls-DMD generates a regular bias, subspace DMD produces almost no bias and is tolerant to the observation noise.

In Figure~\ref{fig:exp_linear_3}, relative errors $\varepsilon$ are plotted against different $m$ with fixed $\sigma_o=0.1$. When $r=1.0$, subspace DMD, standard DMD, and tls-DMD converge when $m$ becomes large. When $r=0.9$, only subspace DMD converges, which is expected from Theorem~\ref{thm:subspace}.

\subsection{Low-rank high-dimensional data}

\begin{figure}[t]
\centering
\subfloat[]{\includegraphics[clip,width=0.23\textwidth]{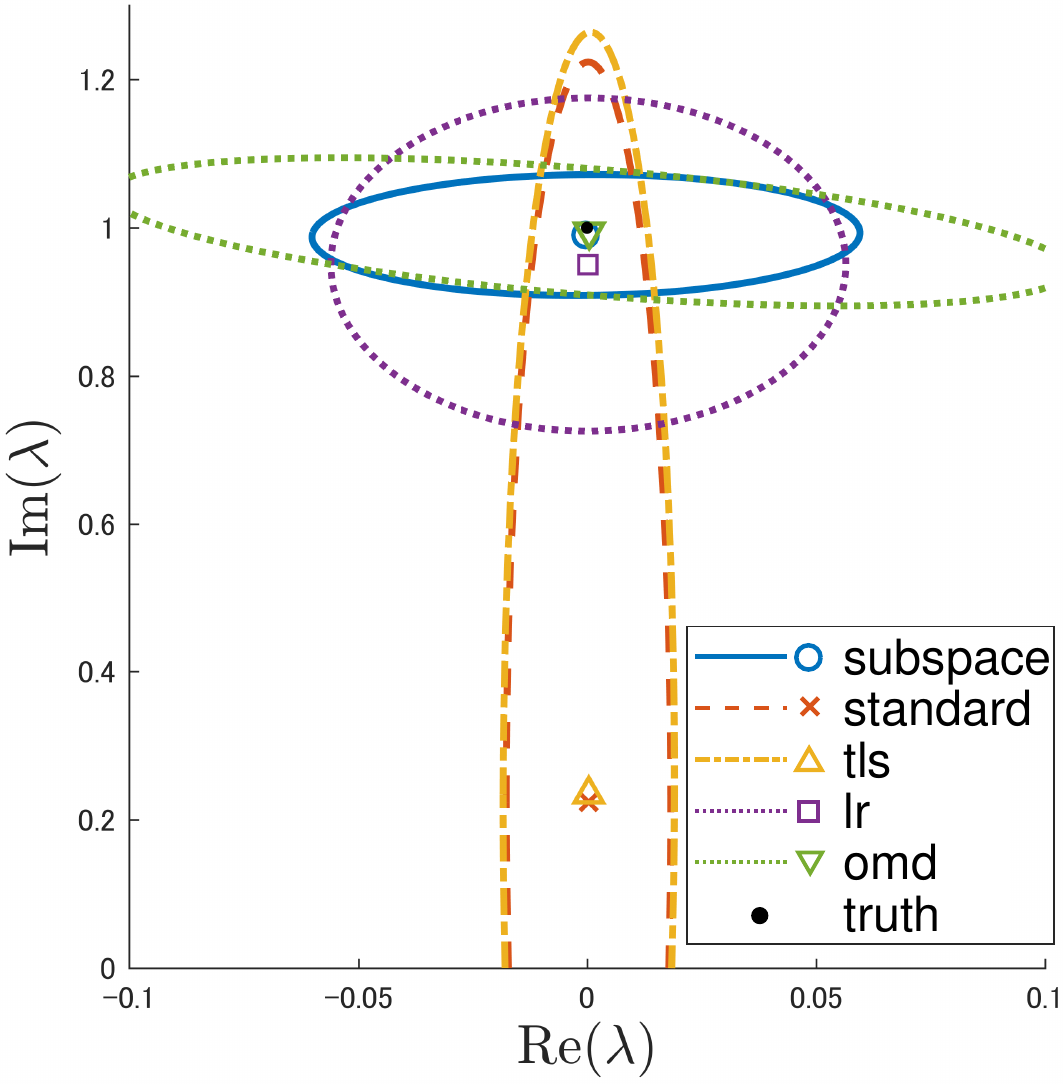}\label{fig:exp_highdimlinear_a}}
\hfill
\subfloat[]{\includegraphics[clip,width=0.23\textwidth]{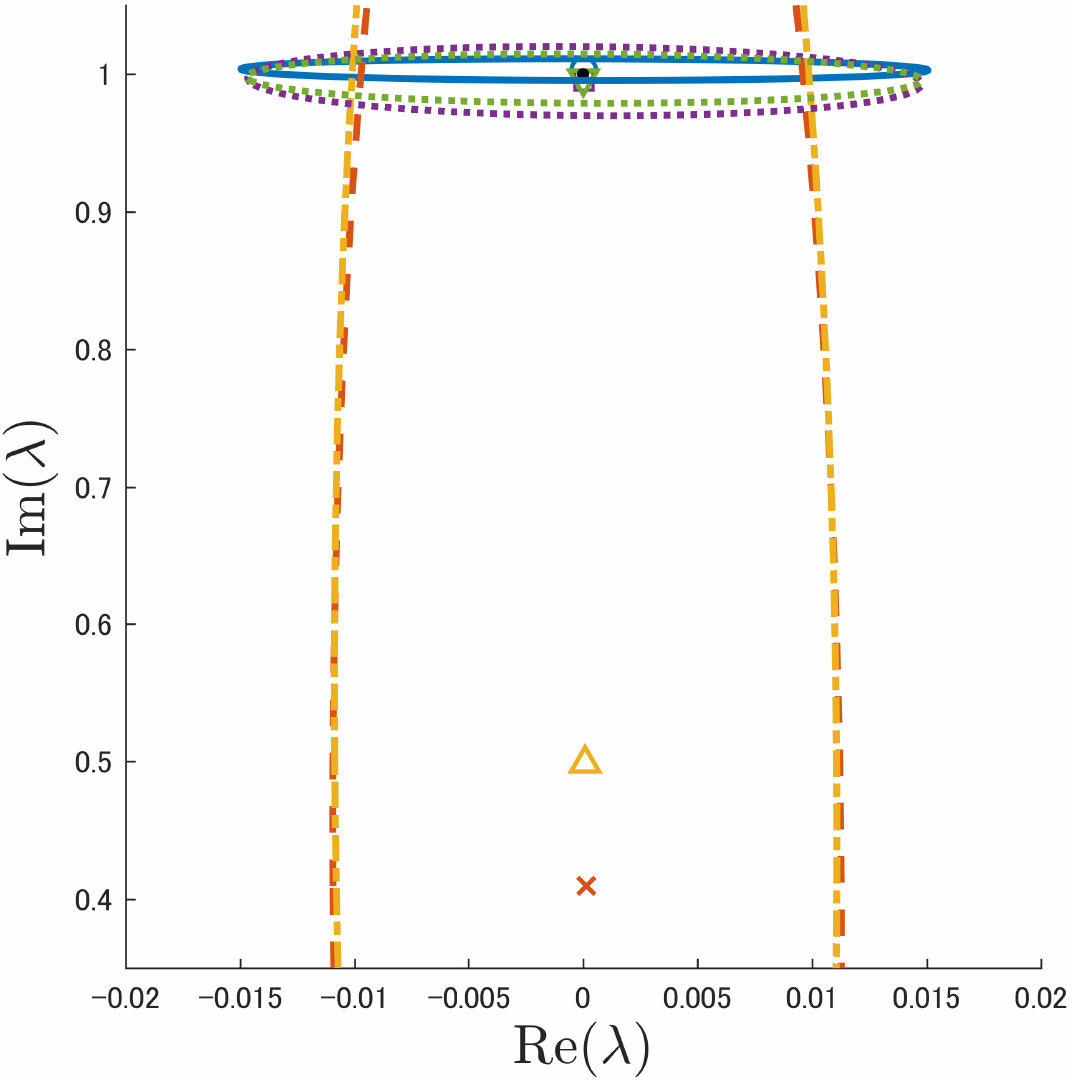}\label{fig:exp_highdimlinear_b}}
\caption{The $95$\% confidence intervals and averages of the eigenvalues estimated by subspace DMD, standard DMD, tls-DMD, lr-DMD, and OMD on the high-dimensional ($n=500$) low-rank ($r=2$) system for 1,000 random trials. The sample sizes are (a) $m=50$ and (b) $m=200$. In both cases, the outputs of subspace DMD distribute around the true values denoted by the black filled circle.}
\label{fig:exp_highdimlinear}
\end{figure}

We have shown the convergence of subspace DMD in the large sample limit in Theorem~\ref{thm:subspace}, but in practice, DMD is often applied in a high-dimensional setting, where the number of snapshots $m$ is much less than dimensionality of data $n$. 
To simulate such circumstances, we generated 500-dimensional data using a linear time-invariant system:
\begin{equation*}
\begin{aligned}
\bm{x}_t &= \bm{L} \begin{bmatrix}i&0\\0&-i\end{bmatrix} \bm{L}^\tr \bm{x}_{t-1} + \bm{e}_t,\\
\bm{y}_t &= \bm{x}_t + \bm{w}_t,
\end{aligned}
\end{equation*}
where $\bm{L} \in \mathbb{R}^{500 \times 2}$ satisfies $\bm{L}^\tr\bm{L}=\bm{I}$, with $\sigma_p=0.1$ and $\sigma_o=0.1$. We prepared two datasets with different sizes, $m=50$ and $m=200$, and applied the following DMD variants: subspace DMD, standard DMD, tls-DMD, optimal low-rank DMD (lr-DMD) \cite{Heas17}, and optimal mode decomposition (OMD) \cite{Wynn13}. For each method, we introduced the way to obtain a low-rank solution; only the first two POD modes were used in the standard DMD and tls-DMD, the rank parameter was set to two in lr-DMD and OMD, and only the first two columns of $\bm{U}_q$ were used in subspace DMD.

In Figure~\ref{fig:exp_highdimlinear_a}, we show the $95$\% confidence intervals and averages of the estimated eigenvalues for 1,000 random trials with $m=50$. While the estimations by standard DMD and tls-DMD deviate far from the true value because of the process noise (and observation noise), the estimations by subspace DMD, lr-DMD, and OMD distribute around the true value. In particular, the variance of the estimation by subspace DMD is smaller than that of lr-DMD and OMD. Figure~\ref{fig:exp_highdimlinear_b} shows the results with $m=200$; in this case, the distributions of the estimations by subspace DMD, lr-DMD, and OMD almost coincide.

\subsection{Application: cylinder wake}

\begin{figure}[t]
\centering
\subfloat[]{\includegraphics[clip,width=0.235\textwidth,valign=t]{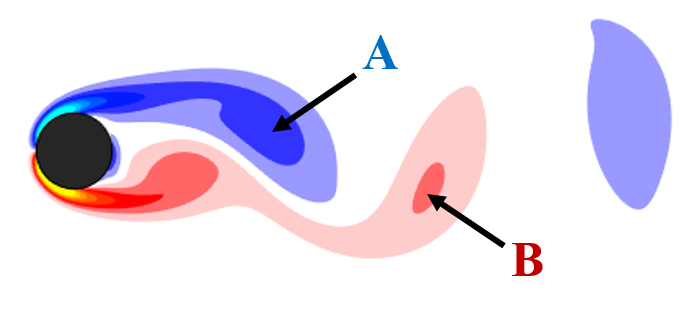}}
\hfill
\subfloat[]{\includegraphics[clip,width=0.225\textwidth,valign=t]{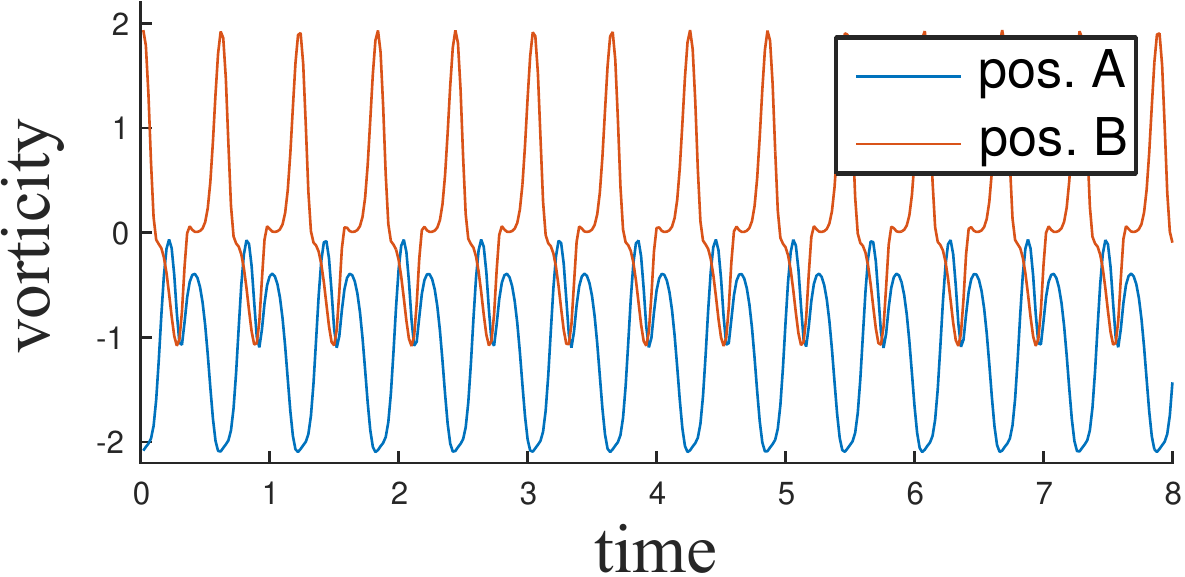}}
\caption{(a) Example snapshot of the vorticity field in the limit-cycle characterized by the K{\'a}rm{\'a}n vortex street. (b) Time variation of the vorticity at locations A and B.}
\label{fig:exp_cylinder_data}
\end{figure}

As an example of an application, we applied subspace DMD to a two-dimensional flow past a circular cylinder with Reynolds number $\mathrm{Re}=100$. We generated data using a solver based on the fast immersed boundary method with the multi-domain technique \cite{Taira07,Colonius08} with four nested grids, each of which contains $450 \times 200$ points. The diameter of the cylinder corresponds to $50$ points in the finest grid. The solver uses the third-order Runge--Kutta method with time-step $\Delta t = 0.02$. We collected $400$ snapshots of the vorticity fields with intervals of size $10\Delta t$ from the limit cycle characterized by the K{\'a}rm{\'a}n vortex street. An example of the snapshots (without observation noise) and the time-variation of vorticity at two locations (A and B) are shown in Figure~\ref{fig:exp_cylinder_data}.
We applied subspace DMD, standard DMD, and tls-DMD to the data contaminated with observation noise of $\sigma_o=0.1$. Every method was run with a low-rank approximation of $r=15$ because the first $15$ POD modes contained about $99.9$\% of the energy of the original data.

In Figure~\ref{fig:exp_cylinder_do1_eigs}, the eigenvalues estimated with the noisy dataset and the noise-free dataset are plotted; subspace DMD and tls-DMD generate smaller biases than standard DMD does. The eigenvalues are numbered from one to seven in Figure~\ref{fig:exp_cylinder_do1_eigs}, according to their frequency (the magnitude of the imaginary part).
In the remainder of Figure~\ref{fig:exp_cylinder_do1}, we show the dynamic modes corresponding to eigenvalue 1 ($\sim 10i$) in the upper row and eigenvalue 4 ($\sim 40i$) in the lower row. We confirm that no adversarial effect is present in the dynamic modes computed by subspace DMD.
Note that, in this cylinder wake experiment, no process noise (except for small errors due to the numerical integration) is involved.
Subspace DMD is also applicable to classical (but frequent) situations of data analysis like this, where almost no process noise is present.

\begin{figure*}[t]
\centering
\begin{minipage}{0.2\textwidth}
\centering
\subfloat[]{\includegraphics[clip,width=0.98\textwidth]{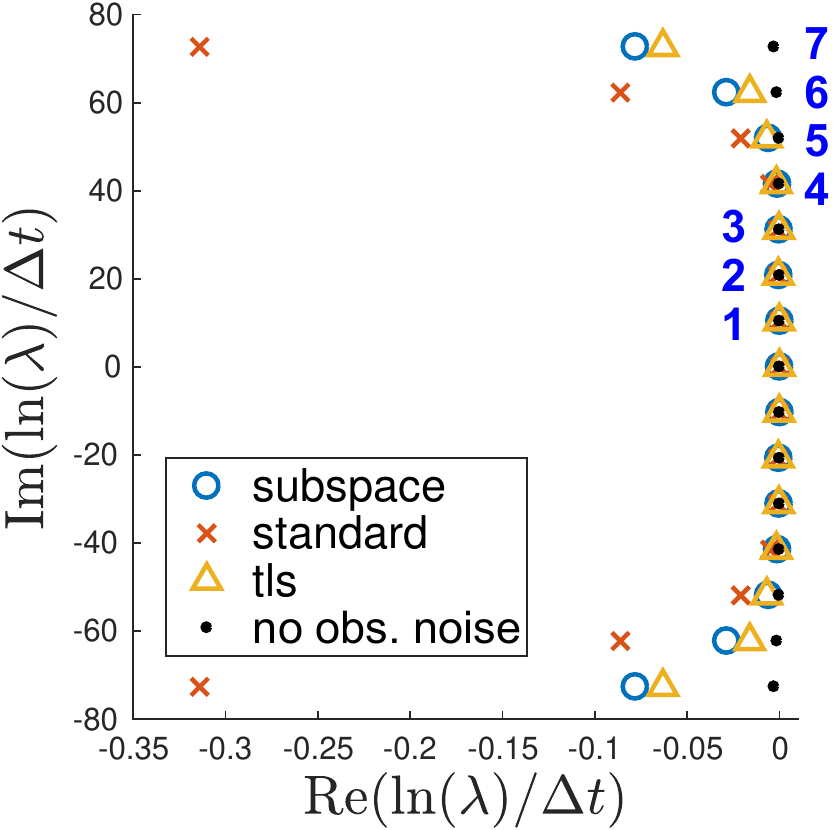}\label{fig:exp_cylinder_do1_eigs}}
\end{minipage}
\hspace{0.02\textwidth}
\begin{minipage}{0.75\textwidth}
\centering
\begin{tabular}{cccc}
\subfloat{\includegraphics[clip,width=0.24\textwidth]{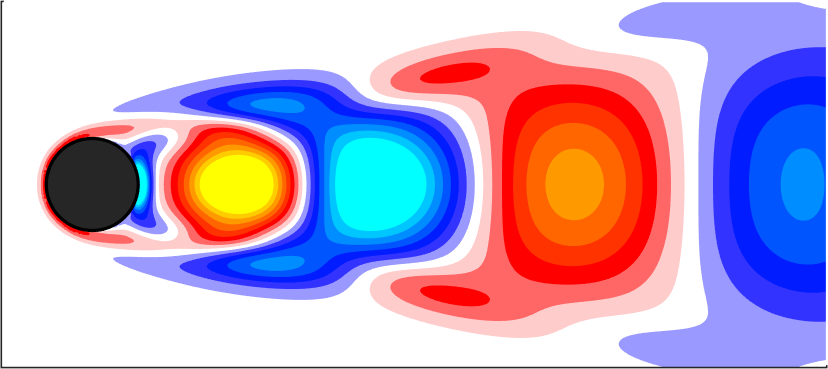}}
&
\subfloat{\includegraphics[clip,width=0.24\textwidth]{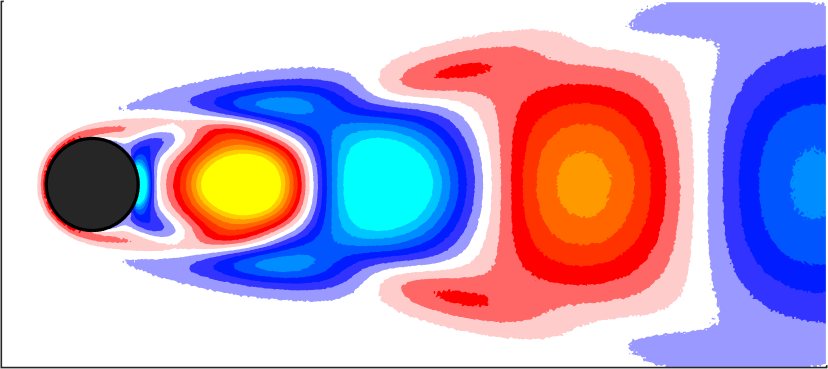}}
&
\subfloat{\includegraphics[clip,width=0.24\textwidth]{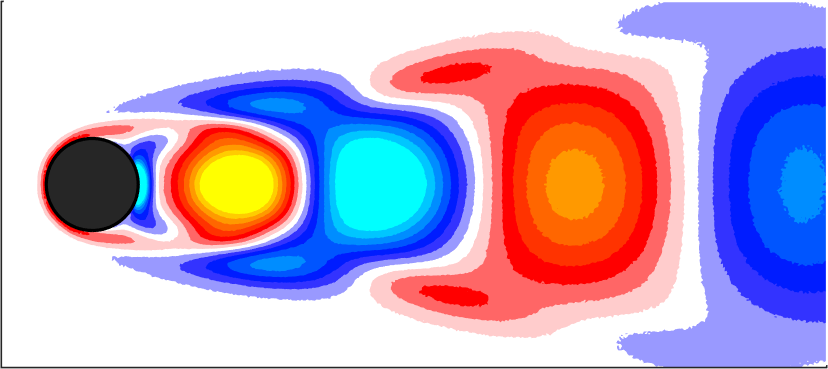}}
&
\subfloat{\includegraphics[clip,width=0.24\textwidth]{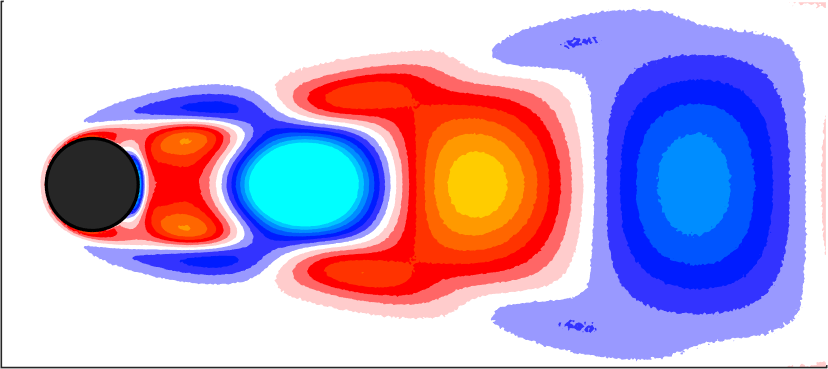}}
\\
\subfloat[]{\addtocounter{subfigure}{-4}\includegraphics[clip,width=0.24\textwidth]{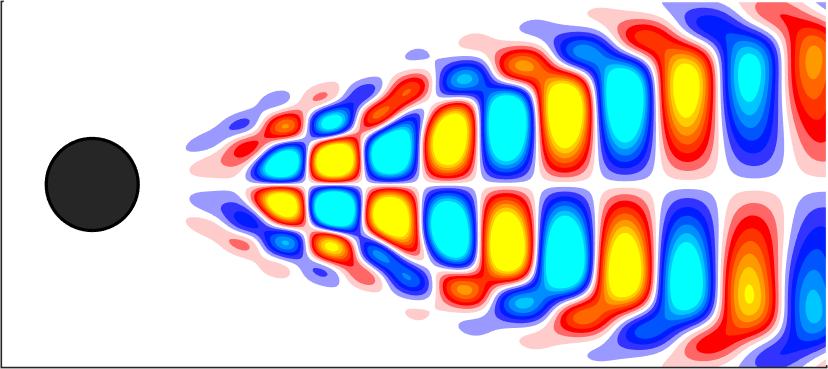}}
&
\subfloat[]{\includegraphics[clip,width=0.24\textwidth]{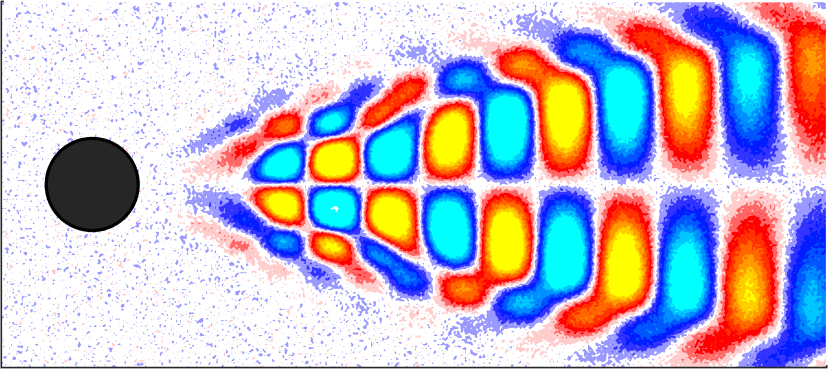}}
&
\subfloat[]{\includegraphics[clip,width=0.24\textwidth]{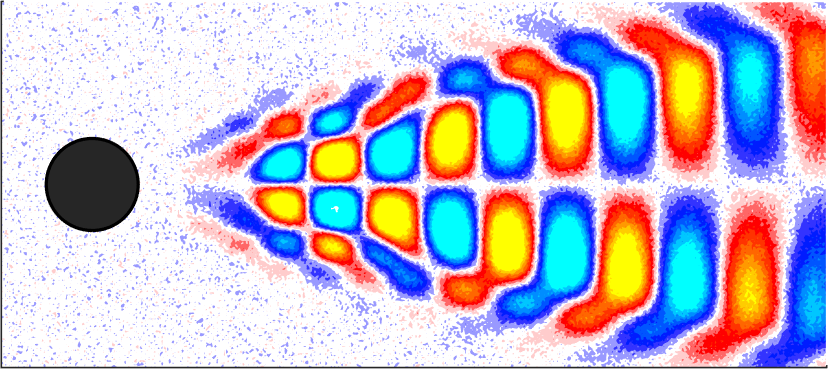}}
&
\subfloat[]{\includegraphics[clip,width=0.24\textwidth]{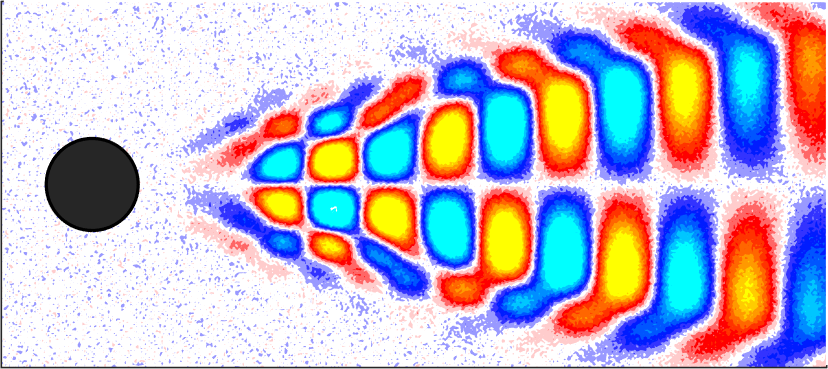}}
\end{tabular}
\end{minipage}
\caption{(a) Continuous-time eigenvalues estimated on the noise-free and noisy datasets, which are numbered from one to seven according to their frequency (i.e., imaginary part). (b) Dynamic modes computed on the noise-free dataset by standard DMD. (c--e) Dynamic modes computed on the noisy dataset by (c) subspace DMD, (d) standard DMD, and (e) tls-DMD. The upper row corresponds to eigenvalue 1 ($\sim 10i$) and the lower row corresponds to eigenvalue 4 ($\sim 40i$). No adversarial effects are present in the results of subspace DMD, even without the process noise. In (b--e), the magnitude of the dynamic modes are normalized to a common color scheme. Best viewed in color.}
\label{fig:exp_cylinder_do1}
\end{figure*}

\section{Discussion}\label{sec:discussion}

This section contains additional discussion on related methods and important properties of Koopman spectral analysis and DMD algorithms that have not been addressed sufficiently in this paper. These issues may imply future directions of research.

\subsection{Relation to subspace system identification and extension to controlled systems}

Subspace DMD has a strong connection to the methods called {\em subspace system identification} (see, e.g., \cite{VanOverschee96,Katayama05}) in their computational methodology. Subspace system identification is a series of methods mainly for the identification of linear time-invariant systems, whereas in this paper, we present a similar methodology for nonlinear dynamical systems involving the observables and the stochastic Koopman operator.

Subspace system identification has been studied from the viewpoint of control theory and admits the presence of {\em input signals} distinguished from the process noise. Therefore, an extension of subspace DMD to controlled dynamical systems would be straightforward following the methodologies developed in the research of subspace system identification. Also, one may take a closed-loop controlled system into consideration. In the context of DMD, Proctor~\etal~\cite{Proctor16a} have discussed a variant of DMD for data obtained from the controlled systems.

\subsection{Construction of Koopman invariant subspace}
\label{subsec:invariant}

The key point of DMD as a numerical realization of the Koopman analysis lies in preparing a set of observables that spans a subspace invariant to $\mathcal{K}$. Several researchers have worked on this issue; Williams~\etal~\cite{Williams15a} proposed using a user-defined dictionary of observables to adopt DMD to highly nonlinear systems, and Brunton~\etal \cite{Brunton16a} utilized an identification technique based on a sparse regression \cite{Brunton16c} to identify the dynamic-specific observables to be used. In addition, Kawahara \cite{Kawahara16} defined the Koopman analysis for observables in reproducing kernel Hilbert spaces to build a theory of DMD based on the reproducing kernels.

Another option, especially for deterministic systems, is to use delay coordinates, i.e., stacking observations at neighboring timestamps in each column of the data matrices. In general, a Krylov-like sequence of observables $\{g,\mathcal{K}g,\mathcal{K}^2g,\dots\}$ rapidly becomes almost linearly dependent, and thus can be used to obtain a subspace that is {\em approximately} invariant to $\mathcal{K}$. Based on the delayed measurements, we obtain a data matrix as a Hankel matrix. The use of delay coordinates for DMD was first discussed by Tu~\etal~\cite{Tu14}, and Brunton~\etal~\cite{Brunton17} mentioned DMD based on Hankel matrices, referring to the well-known Taken's theorem \cite{Takens81}. Susuki and Mezi{\'c} \cite{Susuki16} defined an approximation of the Koopman analysis using Prony's method, which also uses Hankel matrices. Arbabi and Mezi{\'c} \cite{Arbabi16} {showed} the convergence of DMD on Hankel matrices {build with an observable contained in a Koopman invariant subspace}.
However, {since delay coordinates with a linear monomial cannot span a Koopman invariant subspace of nonlinear systems exactly,} one should use a combination of the nonlinear observables and the delay coordinates.

\section{Conclusion}
\label{sec:conclusion}

In this work, we developed {\em subspace DMD}, an algorithm for stochastic Koopman analysis with noisy observations. We have shown that the output of the proposed algorithm converges to the spectra of the stochastic Koopman operator in the large sample limit even if both process noise and observation noise are present. Moreover, we have shown its empirical performance with the numerical examples on different types of random dynamical systems. We also discussed the possible future directions of research, such as an extension to controlled dynamical systems and development of a way to obtain Koopman invariant subspaces.

\begin{acknowledgments}
This work was supported by JSPS KAKENHI Grant No. {JP15J09172, JP26280086, JP16H01548, and JP26289320}.
\end{acknowledgments}

\bibliographystyle{apsrev4-1}
\bibliography{main}

\appendix*
\section{Proofs of lemmas in Section~\ref{sec:noisy}}

\subsection{Proof of Lemma~\ref{lem:p_g}}

From Eq.~\eqref{eq:g_sol}, for the case of $t'>t$, we have
\begin{widetext}\begin{equation*}
\begin{aligned}
\bm{G}_{{t'},{t}}
&= \mathbb{E}_\Omega \left[ \left( \bm{K}_\Omega^{t'}\bm{g}(\bm{x}_0) + \sum_{k=0}^{{t'}-1}\bm{K}_\Omega^{{t'}-k-1}\bm{e}_k \right) \left( \bm{K}_\Omega^{{t}}\bm{g}(\bm{x}_0) + \sum_{k=0}^{{t}-1}\bm{K}_\Omega^{{t}-k-1}\bm{e}_k \right)^\ct \right]\\
&= \bm{K}_\Omega^{{t'}-{t}} \mathbb{E}_\Omega \left[ \left( \bm{K}_\Omega^{{t}}\bm{g}(\bm{x}_0) + \sum_{k=0}^{{t}-1}\bm{K}_\Omega^{{t}-k-1}\bm{e}_k + \sum_{k={t}}^{{t'}-1}\bm{K}_\Omega^{{t}-k-1}\bm{e}_k \right) \left( \bm{K}_\Omega^{{t}}\bm{g}(\bm{x}_0) + \sum_{k=0}^{{t}-1}\bm{K}_\Omega^{{t}-k-1}\bm{e}_k \right)^\ct \right]\\
&= \bm{K}_\Omega^{{t'}-{t}} \bm{G}_{t,t} + \bm{K}_\Omega^{{t'}-{t}} \mathbb{E}_\Omega \left[ \left( \sum_{k={t}}^{{t'}-1}\bm{K}_\Omega^{{t}-k-1}\bm{e}_k \right) \left( \bm{K}_\Omega^{{t}}\bm{g}(\bm{x}_0) + \sum_{k=0}^{t-1}\bm{K}_\Omega^{{t}-k-1}\bm{e}_k \right)^\ct \right]\\
&= \bm{K}_\Omega^{{t'}-{t}} \bm{G}_{t,t} + \sum_{k=t}^{t'-1} \bm{K}_\Omega^{t'-k-1} \mathbb{E}_\Omega \left[ \bm{e}_k \right] \bm{g}(\bm{x}_0)^\ct \left( \bm{K}_\Omega^t \right)^\ct + \sum_{k=0}^{t-1}\sum_{k'=t}^{t'-1} \bm{K}_\Omega^{t'-k'-1} \mathbb{E}_\Omega \left[ \bm{e}_{k'}\bm{e}_k^\ct \right] \left( \bm{K}_\Omega^{t-k-1} \right)^\ct \\
&= \bm{K}_\Omega^{{t'}-{t}} \bm{G}_{t,t},
\end{aligned}
\end{equation*}\end{widetext}
where the last equality holds because $\bm{e}$ is zero-mean and because of Assumption~\ref{asmp:e_cov}.
For the case of $t'>t$, from the definition of $\bm{G}_{t',t}$ and the above equation, we have
\begin{equation*}
\bm{G}_{t',t} = \bm{G}_{t,t'}^\ct = \bm{G}_{t',t'} \left( \bm{K}_\Omega^{t-t'} \right)^\ct.
\end{equation*}
%

\subsection{Proof of Lemma~\ref{lem:p_h}}

From Eqs.~\eqref{eq:g_sol}~and~\eqref{eq:h_def}, for the case of $t'>t$, we have
\begin{widetext}\begin{equation*}
\begin{aligned}
\bm{H}_{{t'},{t}}
&= \mathbb{E}_{\Omega,S} \left[ \left( \bm{g}(\bm{x}_{t'}) + \bm{w}_{t'} \right) \left( \bm{g}(\bm{x}_t) + \bm{w}_t \right)^\ct \right] \\
&= \mathbb{E}_{\Omega,S} \left[ \bm{g}(\bm{x}_{t'})\bm{g}(\bm{x}_t)^\ct + \bm{g}(\bm{x}_{t'})\bm{w}_t^\ct + \bm{w}_{t'}\bm{g}(\bm{x}_t)^\ct + \bm{w}_{t'}\bm{w}_t^\ct \right] \\
&= \bm{K}_\Omega^{t'-t}\bm{G}_{t,t} +  \sum_{k=0}^{t'-1} \bm{K}_\Omega^{t'-k-1} \mathbb{E}_{\Omega,S} \left[\bm{e}_k\bm{w}_t^\ct \right] + \left( \sum_{k=0}^{t-1} \bm{K}_\Omega^{t-k-1} \mathbb{E}_{\Omega,S} \left[ \bm{e}_k \bm{w}_{t'}^\ct \right] \right)^\ct  \\
&= \bm{K}_\Omega^{t'-t}\bm{G}_{t,t} +  \bm{K}_\Omega^{t'-t-1} \bm{R} \\
&= \bm{K}_\Omega^{t'-t-1} \left( \bm{K}_\Omega \bm{G}_{t,t}  + \bm{R} \right),
\end{aligned}
\end{equation*}\end{widetext}
where the third and the fourth equalities are from Assumption~\ref{asmp:w_cov}. When $t'=t$, from Assumption~\ref{asmp:w_cov}, we have
\begin{equation*}
\begin{aligned}
\bm{H}_{t,t}
&= \mathbb{E}_{\Omega,S} \left[ \left( \bm{g}(\bm{x}_{t}) + \bm{w}_{t} \right) \left( \bm{g}(\bm{x}_t) + \bm{w}_t \right)^\ct \right] \\
&= \bm{G}_{t,t} + \bm{Q}.
\end{aligned}
\end{equation*}

\end{document}